\documentclass[a4paper,11pt]{article} 

\usepackage{t1enc} 
\usepackage[utf8]{inputenc}
\usepackage{amssymb,amsmath,amsthm} 
\usepackage{graphicx,hyperref}
\usepackage{pgf,tikz}
\usepackage{hyperref}
\usepackage{color}
\usetikzlibrary{shapes,backgrounds,arrows}
\usepackage{mathrsfs}
\usetikzlibrary{arrows}
\newtheorem{theorem}{Theorem}[section]
\newtheorem{proposition}[theorem]{Proposition}
\newtheorem{corollary}[theorem]{Corollary}
\newtheorem{lemma}[theorem]{Lemma}
\newtheorem{definition}[theorem]{Definition}

\title{On balanced biregular cages}
\author{}

\begin{document}

\author{Gabriela Araujo-Pardo\textsuperscript{1}
 \\
garaujo@im.unam.mx
\\
Gy\"orgy Kiss\textsuperscript{1,2}
 \\
gyorgy.kiss@ttk.elte.hu
}

\maketitle
\begin{abstract}
In this paper, we introduce a problem closely related to the {\emph{Cage Problem}}. We are interested in {\emph{Balanced Biregular Cages}}, which are the smallest biregular graphs of fixed girth that have the same number of vertices of one degree as the other. We provide lower and upper bounds on the order of balanced biregular graphs. In particular, we construct relatively small balanced biregular graphs from incidence graphs of finite projective, affine, and biaffine planes and we show that some of the obtained graphs are balanced biregular cages.
\end{abstract}

\footnotetext[1]{Research supported in part by PAPIIT-UNAM-M{\' e}xico IN113324, and  SECIHTI-M{\'e}xico under Project CBF2023-2024-552.}
\footnotetext[2]{This research was supported in part by the Hungarian National Research, Development and Innovation Fund, grant numbers 2024-1.2.5-T\'ET-2024-00082 and SNN 152582, and by the Slovenian Research Agency, research program N1-0429.}

\noindent
\textbf{MSC:} 05C035, 51E20\\
\textbf{Keywords:} cage problem, biregular graphs, finite planes 

\section{Introduction}
Many issues in extremal graph theory involve finding the maximum or minimum of a graph parameter subject to constraints on other parameters or structural properties. Often, these studies focus on estimating key quantities such as the number of vertices or edges. In this paper, we explore similar questions, which are also inspired by the classic {{\em{Cage Problem}}, which consists of finding regular graphs of fixed girth (length of the smallest cycle) and minimum order. 

There exist a lot of different generalizations of this problem, one of them asks about graphs with a given degree set, a fixed girth, and minimum order. These graphs were introduced by Chartrand, Gould, and Kapoor in \cite{CGK81}, and also in that paper, they focus on the question of biregular graphs (where the degree set has exactly two elements). Also, Yuansheng and Liang \cite{YuLi03} studied these biregular graphs and gave interesting lower and upper bounds for some specific girths, one of which is the lower bound for biregular graphs of girth $6$. After that, many authors have studied bipartite cages and obtained different results regarding their order, see \cite{AraExoJaj, BobJajPis, ExooJaj16, Fil17, JWGJS}.

Later, Filipovski, Jajcay, and Ramos-Rivera \cite{FilRamRivJaj19} further specialized the problem for bipartite biregular graphs, that is, for bipartite graphs with the property that the vertices of one bipartition set have degree $m$ and the vertices of the other bipartition set have degree $n$. Note that, since the graphs are bipartite, the girth is even. We would like to emphasize that one of the differences between the problems of finding biregular and bipartite biregular cages is related to the "balance" between the vertices of one degree and the other. While in the problem of finding biregular cages, it is essential to have only a small number of vertices of higher degree, in the case of bipartite biregular graphs, it is crucial to consider arithmetic relationships between the two degree values that allow one side of the partition to consist of vertices of one degree and the other side of vertices of the other degree.

In this paper, we introduce a third variation of the biregular cage problem, which we refer to as the balanced problem. We ask about the existence of {\emph{balanced biregular cages}}, defined as biregular graphs with a fixed girth and minimum order, in which the number of vertices of one degree is equal to the number of vertices of the other. 

The paper is organized as follows. In Section \ref{preliminaries}, we recall the classical {\emph{Cage Problem}} and the well-known {\emph{Moore bound}} associated with it. We also introduce the notion of {\emph{balanced biregular cages}}, along with the definition of finite projective planes, their incidence graphs, and the concept of biaffine planes. 

In Section \ref{bounds}, we establish the existence of these cages and provide lower and upper bounds 
on their order. We also address the case in which the cage contains leaves (vertices of degree $1$). In Section \ref{girth3and4}, we completely solve the problem for graphs of girth $3$ and $4$. Section \ref{girth5} is devoted to girth $5$, where we present specific constructions of balanced biregular cages with consecutive degrees, as well as a general construction of bipartite biregular graphs using the Levi graphs of projective planes and biaffine planes.

In Section \ref{girth6}, we construct balanced biregular cages and graphs of girth $6$ using projective planes. Finally, in Section \ref{conclandfuturework}, we conclude the paper with a discussion of the results and directions for future work. 

\section{Preliminaries} \label{preliminaries}
First of all, we recall the traditional  {\emph{Cage Problem:}} A simple, finite, connected graph $G=(V, E)$ is \emph{$k$-regular} if each vertex has exactly 
$k$ neighbours. It is of  \emph{girth $g$} if its smallest cycles have $g$ vertices. A 
\emph{$g$-cycle} or  \emph{girth cycle} is a cycle of length $g$. The number 
of vertices of $G$ is called the \emph{order} of $G$. A \emph{$(k,g)$-graph} is a
$k$-regular graph of girth $g$. If it has minimal order, then it is called a 
\emph{$(k,g)$-cage}. For more information about cages, we refer to the dynamic 
survey by Exoo and Jajcay \cite{ExooJaj08}.

For $k=2$ the connected $(k,g)$-graphs are the $g$-cycles. For $k>2$, the well-known \emph{Moore bound} is obtained by counting the vertices of a tree that emerge from a vertex if $g$ is odd, or from an edge if $g$ is even. As we will use these trees, called  {\em{Moore trees}}, and their
modifications on several occasions throughout this work, we provide a brief description of them.

Let $u$ be a vertex and let ${\cal{T}}_{u}$ be a tree of depth $\frac{g-1}{2}$, rooted in $u$, with all the vertices of degree $k$ except the leaves. Notice that, if $G$ is a $k$-regular graph of odd girth $g$, then $G$ necessarily contains this tree, denoted by  ${\cal{T}}_u$.
On the other hand, if $g$ is even and $uw$ is an edge, then a Moore tree for even girth, denoted by 
${\cal{T}}_{uw}$, consist of the edge $uw$ and two disjoint trees, ${\cal{T}}_u$ and ${\cal{T}}_w$ rooted in $u$ and $w$, respectively. Both ${\cal{T}}_u$ and ${\cal{T}}_w$ have depth $\frac{g-2}{2}$ and 
all the vertices have degree $k$, except the leaves. As before, if $G$ is a $k$-regular graph of even girth $g$, then $G$ must contain ${\cal{T}}_{uw}$. 
Counting the vertices of these Moore trees results in the following bound.
    \begin{theorem}[Moore bound, \cite{ExooJaj08}]
    Let $G$ be a $(k,g)$-graph with $k>2$. Then the order of $G$ is at least $n_0(k,g)$, where
    $$n_0(k,g)= 
\begin{cases}
\frac{k(k-1)^{\frac{g-1}{2}}-2}{k-2}, \quad\text{if g is odd;}\\
\frac{2(k-1)^\frac{g}{2}-2}{k-2}, \quad\text{~~if g is even.}
\end{cases}$$
    \end{theorem}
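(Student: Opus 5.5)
We prove the bound by showing that any $(k,g)$-graph $G$ contains the appropriate Moore tree as a subgraph with all of its vertices distinct. The order of $G$ is then at least the number of vertices of that tree, and a geometric sum gives $n_0(k,g)$. We treat odd and even $g$ separately, as in the description of ${\cal{T}}_u$ and ${\cal{T}}_{uw}$ above.

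\emph{Odd girth $g=2d+1$.} Fix a vertex $u$ and build ${\cal{T}}_u$ by breadth-first search to depth $d=\frac{g-1}{2}$. Every non-leaf has degree $k$ in $G$. So $u$ gets $k$ children, and each later non-leaf vertex gets $k-1$ children, namely its neighbours other than its parent.

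The central claim is that all vertices at distance at most $d$ from $u$ reached this way are distinct. Suppose a vertex is reached along two different paths from $u$, each of length at most $d$. The union of these two paths contains a closed walk of length at most $2d<g$ that is not backtracking, and hence contains a cycle of length less than $g$. This contradicts the girth.

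Level $i\ge 1$ therefore has exactly $k(k-1)^{i-1}$ vertices. The order of $G$ is at least
$$1+k\sum_{i=0}^{d-1}(k-1)^i=1+k\,\frac{(k-1)^d-1}{k-2}=\frac{k(k-1)^d-2}{k-2}.$$

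\emph{Even girth $g=2d+2$.} Fix an edge $uw$ and build ${\cal{T}}_u$ and ${\cal{T}}_w$ to depth $d=\frac{g-2}{2}$. In ${\cal{T}}_u$ the vertex $w$ is excluded from the children of $u$, and symmetrically for ${\cal{T}}_w$. Thus each root also contributes $k-1$ children.

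The distinctness claim now has two parts. Within one tree, distinctness follows as in the odd case, since $2d<g$. A vertex lying in both trees would give a cycle through the edge $uw$ of length at most $d+d+1=g-1<g$, again contradicting the girth.

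Each tree has $\sum_{i=0}^{d}(k-1)^i=\frac{(k-1)^{d+1}-1}{k-2}$ vertices. Doubling gives
$$\frac{2(k-1)^{d+1}-2}{k-2}=\frac{2(k-1)^{g/2}-2}{k-2}.$$

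The main point requiring care is the distinctness step: we must extract a genuine cycle, not just a closed walk, from two coinciding tree paths. We do this by taking the first vertex where the two paths reconverge after they diverge. The resulting cycle has length at most the sum of the two path lengths, which is at most $2d$ in the odd case and at most $2d+1=g-1$ in the cross-tree even case. The hypothesis $k>2$ is used only so that the geometric sums can be written with denominator $k-2$.
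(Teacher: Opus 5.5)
Your proof is correct and takes the same approach as the paper. The paper only asserts that every $(k,g)$-graph contains the Moore tree ${\cal{T}}_u$ (odd $g$) or ${\cal{T}}_{uw}$ (even $g$) and counts its vertices. Your girth-based distinctness argument, which extracts a genuine short cycle from two coinciding tree paths, supplies exactly the detail the paper leaves implicit.
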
}

In this paper, we will study the \emph{balanced biregular graphs and cages}, defined as follows:
\begin{definition}
Let $1\leq r\leq s$ and $3\leq g$ be integers. An {\emph{$(r,s;g)$-balanced biregular graph}}
(for short an $(r,s;g)$-babi-graph) is a graph of girth $g$ having degree set equal to $\{ r,s\} $ and satisfying the additional property that the number of vertices of degree $r$ equals the number of vertices of degree $s$. We denote it as 
$(r,s;g)$-babi-graph and call it, for short, a babi-graph. An $(r,s;g)$-{\emph{balanced biregular cage}} (for short an $(r,s;g)$-babi-cage) is an $(r,s;g)$-babi-graph of minimum order. Moreover, $n_{bb}(r,s;g)$ denotes the order of an 
$(r,s;g)$-babi-cage.
\end{definition}


A significant part of our constructions uses the geometric properties of finite planes. Our motivation comes from the fact that the incidence graphs of projective planes are known to be extremal with regard to the $(k,6)$-graphs. An introduction to finite geometries and the detailed description of concepts we use can be found, for example, in \cite{KSz}. Here we give only the most necessary definitions.

A finite projective plane of order $q$ has $q^2+q+1$ points and $q^2+q+1$ lines, and it satisfies the following four axioms:
\begin{itemize}
\item{\bf P1.} 
For any two distinct points, there is precisely one line 
that is incident with both points.
\item{\bf P2.} 
For any two distinct lines, there is precisely one point that is incident with both lines.
\item{\bf P3.}
Each line is incident with exactly $q+1$ points.
\item{\bf P4.}
Each point is incident on exactly $q+1$ lines..
\end{itemize}
For each prime power $q$, there exists $\mathrm{PG}(2,q)$, a projective plane of order $q$, that can be coordinatized by the elements of $\mathrm{GF}(q)$, the finite field of order $q$. All the known projective planes have prime power order. It is an open problem whether there is a projective plane whose order is not a prime power.

One can also construct other incidence structures from projective planes. We will use biaffine planes in this paper. These are not so well-known as projective planes, so for the sake of completeness, we give their definition.

\begin{definition}
Let $\Pi$ be a finite projective plane of order $q$. A biaffine plane is obtained from $\Pi$ by choosing a point-line pair $(P,\ell )$ and deleting $P$, $\ell$, all the lines incident with $P$, and all the points belonging to $\ell$. If the point-line pair is incident in $\Pi$, then we call the biaffine plane type 1; otherwise, type 2.
\end{definition}

The \emph{incidence graph}, also called the \emph{Levi graph}, of a point-line incidence structure is a bipartite graph whose bipartition sets correspond to the
set of points and set of lines, respectively, and there is an edge between two vertices if
and only if the corresponding point is incident with the corresponding line.  Throughout this paper, if $G$ is an incidence graph, then its vertices will be called points and lines, 
and the type of a vertex is either point or line, according to whether they correspond to a point or a line of the geometric structure. 

The incidence graph of a finite projective plane of order $q$ is a $(q+1)$-regular graph with girth 6
on $2(q^2+q+1)$ vertices.
Since the Moore bound for $k=q+1$ and $g=6$ is equal to the orders of these graphs, the incidence graphs of projective planes are $(q+1,6)$-cages.

\section{Lower and upper bounds on the order of balanced biregular graphs}\label{bounds}

First, we present some preliminary observations on babi-graphs.

\begin{definition}
Let $G$ be an $(r,s;g)$-babi-graph. A vertex of $G$ is called \emph{thin vertex} or
\emph{fat vertex} if its degree is $r$ or $s$, respectively.

The edges of $G$ are divided into three classes. A \emph{thin edge} connects two
thin vertices, a \emph{fat edge} connects two fat vertices, and a \emph{mixed edge}
connects one thin and one fat vertex. 
\end{definition}

\begin{proposition}\label{fatedges_Gyuri}
Let $2\leq r<s$ be integers. 
Any babi-graph of order $v$ has at least $\frac{v}{4}(s-r)$ fat edges. 
\end{proposition}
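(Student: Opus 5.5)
Since the graph is balanced, it has exactly $v/2$ thin vertices and $v/2$ fat vertices. Let $e_t$, $e_f$ and $e_m$ be the numbers of thin, fat and mixed edges. The argument is a double count of edge endpoints on each side, with no use of the girth.

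The thin vertices have total degree $\frac{v}{2}r$. Each mixed edge uses exactly one thin endpoint and each thin edge uses two, so
$$\frac{v}{2}\,r = 2e_t + e_m .$$
Since $e_t\ge 0$, this gives $e_m \le \frac{v}{2}r$.

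The fat vertices have total degree $\frac{v}{2}s$. Each mixed edge contributes one fat endpoint and each fat edge contributes two, so
$$\frac{v}{2}\,s = 2e_f + e_m .$$

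Solving the second identity for $e_f$ and using the bound on $e_m$:
$$e_f = \frac{1}{2}\Big(\frac{v}{2}s - e_m\Big) \ge \frac{1}{2}\Big(\frac{v}{2}s-\frac{v}{2}r\Big) = \frac{v}{4}(s-r).$$
This is the claimed bound.

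The hypothesis $2\le r$ is not needed for this count, though it is harmless. There is no genuine obstacle here. The only point that needs care is that the mixed edges counted from the thin side are the same edges counted from the fat side, so a single quantity $e_m$ appears in both identities.
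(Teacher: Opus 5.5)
Your proof is correct and is essentially the paper's argument: the paper writes the same two endpoint counts, $2f+m=\frac{v}{2}s$ and $2t+m=\frac{v}{2}r$. The only difference is cosmetic. The paper subtracts the two counts to get the exact identity $f=\frac{v}{4}(s-r)+t$, which it reuses later, whereas you pass through the inequality $e_m\le\frac{v}{2}r$ and so discard the slack $2e_t$.
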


\begin{proof} 
Let $G$ be an $(r,s;g)$-babi-graph of order $v.$ Let $f,t,m$ denote the number of fat, thin, and mixed edges of $G$, respectively. 
Then, counting the number of edges emanating from the fat and thin vertices, we have
$$2f+m=\frac{v}{2}s, \quad 2t+m=\frac{v}{2}r.$$
Hence 
\begin{equation}
\label{fatedgenumber}
f=\frac{v}{4}(s-r)+t.
\end{equation}
Since $t\geq 0,$ this proves the statement.
\end{proof} 

For any fat vertex $x$ let $deg_f(x)$ denote the number of fat edges through $x.$
Since any fat edge joins two fat vertices, we have 
$$\sum _{x \text{ fat vertex}}deg_f(x)=2f.$$

\begin{proposition}\label{fatedges-2_Gyuri}
Let $2\leq r<s$ be integers and $G$ be an $(r,s;g)$-babi-graph of order $v.$ 
Then $G$ has a fat edge $xy$ for which $deg_f(x)+deg_f(y)\geq 2(s-r).$
\end{proposition}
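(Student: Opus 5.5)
We argue by averaging over the fat edges, combining the identity $\sum_{x \text{ fat}} deg_f(x)=2f$ with the lower bound on $f$ from Proposition~\ref{fatedges_Gyuri}. Let $F$ denote the set of fat edges, so $|F|=f\ge \frac{v}{4}(s-r)>0$; in particular a fat edge exists. Consider the sum
$$S=\sum_{xy\in F}\bigl(deg_f(x)+deg_f(y)\bigr).$$
Each fat vertex $x$ contributes $deg_f(x)$ to $S$ exactly $deg_f(x)$ times, once for each fat edge through it. Hence
$$S=\sum_{x \text{ fat}} deg_f(x)^2 .$$

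Next we bound $S$ from below by the Cauchy--Schwarz inequality over the $v/2$ fat vertices:
$$\sum_{x \text{ fat}} deg_f(x)^2\ \ge\ \frac{\bigl(\sum_{x \text{ fat}} deg_f(x)\bigr)^2}{v/2}=\frac{(2f)^2}{v/2}=\frac{8f^2}{v}.$$
Therefore the average of $deg_f(x)+deg_f(y)$ over the $f$ fat edges is at least
$$\frac{S}{f}\ge \frac{8f}{v}\ge \frac{8}{v}\cdot\frac{v}{4}(s-r)=2(s-r),$$
where the last inequality is Proposition~\ref{fatedges_Gyuri}. Some fat edge attains at least the average, and this edge $xy$ satisfies $deg_f(x)+deg_f(y)\ge 2(s-r)$.

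The main step is the double-counting identity that converts the edge sum into $\sum deg_f(x)^2$. After that, Cauchy--Schwarz and Proposition~\ref{fatedges_Gyuri} finish the proof directly. Note that the argument uses only that there are exactly $v/2$ fat vertices, which is the balance condition, and that $r<s$, which guarantees $f>0$.
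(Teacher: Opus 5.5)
Your proof is correct and follows the paper's argument: you double count to turn the edge sum into $\sum_{x \text{ fat}} deg_f(x)^2$, apply Cauchy--Schwarz over the $v/2$ fat vertices, and finish with the fat-edge count from Proposition~\ref{fatedges_Gyuri}. You also state explicitly that $f>0$, which is needed to divide by $f$ and which the paper leaves implicit.
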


\begin{proof} 
By Proposition \ref{fatedges_Gyuri}, $G$ has $\frac{v}{2}$ fat vertices and  
$\frac{v}{4}(s-r)+t$ fat edges. 
Hence we can estimate the average value of the sum of $deg_f(x)+deg_f(y)$ for fat edges using the
inequality between the arithmetic and quadratic means.
\begin{equation}\label{average}
\begin{aligned}
 & \frac{1}{f}\sum _{xy \text{ fat edge}}(deg_f(x)+deg_f(y))=
\frac{1}{f}\sum _{x \text{ fat vertex}}(deg_f(x))^2 \geq \\ 
 &\frac{1}{f} \frac{(\sum _{x \text{ fat vertex}}deg_f(x))^2}{\frac{v}{2}}=\frac{8f}{v} 
=2(s-r)+\frac{8t}{v}\geq 2(s-r).
\end{aligned}
\end{equation}
\end{proof}

If $r=1$ and $G$ is a $(1,s;g)$-babi-graph, then $G$ has no thin edge, hence it has $\frac{v}{4}(s-1)$ fat edges. Delete the thin vertices of $G$ and the edges emanating from them. The resulting graph, $G'$ 
has girth $g$, its order is $\frac{v}{2},$ its maximum degree is $s$ and it has $\frac{v}{4}(s-1)$ edges.
There are two possibilities. Either $G'$ is an $(s-1,g)$-graph or $G'$ has some vertices of degree $s$. 
In the latter case, $G'$ is not a regular graph.

This procedure also works in reverse, because adding leaves to the vertices of a graph does not change the girth of the graph. In the first case, if $G'$ is an $(s-1,g)$-graph of order $\frac{v}{2}$ and we glue a leaf in each of its vertices, then the resulting graph is a $(1,s;g)$-babi-graph of order $v.$ 
In the second case, if $G'$ has girth $g$, its order is $\frac{v}{2},$ its maximum degree is $s$ and it has 
$\frac{v}{4}(s-1)$ edges, then for each vertex $u$ we can glue $s-deg(u)$ leaves. The resulting graph $G$
has $\frac{v}{2}$ vertices of degree $s$ (these are the vertices of $G'$) and 
$$\sum _{u\in V(G')}(s-deg(u))=\frac{v}{2}s-\sum _{u\in V(G')}deg(u)=
\frac{v}{2}s-2\frac{v}{4}(s-1)=\frac{v}{2}$$
vertices of degree 1. The girth of $G$ is the same as the girth of $G'$, hence $G$ is a $(1,s;g)$-babi-graph.
Therefore, the problem of finding $(1,s;g)$-babi-cages is more difficult than the classical problem
of finding $(s-1,g)$-cages.


From now on, we assume that $r>1$.
First, we show the existence of babi-graphs 
for any $2\leq r<s$ and $3\leq g$. 

Let $G_1=(V_1,E_1)$ and $G_2=(V_2,E_2)$ be two graphs such that $V_1\cap V_2=\emptyset$, and
$x_1y_1\in E_1$ and let $x_2y_2\in E_2$ be two edges. Then the \emph{connection of $G_1$ and $G_2$
through switching $x_1y_1$ and $x_2y_2$} is the graph 
$$\Gamma =
\left( V_1\cup V_2, (E_1\cup E_2\cup \{ x_1x_2,y_1y_2\} )\setminus \{ x_1y_1, x_2y_2\}  \right).$$

This operation obviously preserves the degrees of all vertices. We claim that if both $G_1$ and $G_2$ 
have girth $g$, then the girth of $\Gamma $ is at least $g$. Let $\mathcal{C}$ be a cyle in $\Gamma $. If all vertices of $\mathcal{C}$ are either in $G_1$ or in $G_2$, then its length is at least $g$. If 
$\mathcal{C}$ contains the edge $x_1x_2$, then it must contain the edge $y_1y_2$. Since the length of
the shortest path joining $x_i$ and $y_i$ in $G_i \setminus \{ x_iy_i\} $ is at least $g-1$, the length of 
$\mathcal{C}$ is at least $1+(g-1)+1+(g-1)=2g$ in this case.

\begin{proposition}
\label{sokszoroz}
Suppose that there exists a $(k,g)$-graph of order $v$. Then, for all positive integers $n\in \mathbb{N}$, 
there exist $(k,g)$-graphs of order $nv$.
\end{proposition}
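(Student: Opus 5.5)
The plan is to build a $(k,g)$-graph of order $nv$ out of $n$ disjoint copies of a given $(k,g)$-graph $G$ of order $v$, using the connection through switching defined just above. Take $n$ vertex-disjoint copies $G^{(1)},\dots,G^{(n)}$ of $G$. Their disjoint union is already $k$-regular with girth $g$ and order $nv$. The only possible objection is connectedness, since the paper works with connected graphs. So I will join the copies one at a time by switching.

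Fix an edge $xy$ of $G$ (it exists because $k\ge 1$), and let $x_iy_i$ be its copy in $G^{(i)}$. Set $\Gamma_1=G^{(1)}$. For $i\ge 2$, let $\Gamma_i$ be the connection of $\Gamma_{i-1}$ and $G^{(i)}$ through switching some edge of $\Gamma_{i-1}$ lying in $G^{(i-1)}$ and the edge $x_iy_i$.

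We need to check three things by induction.
\begin{itemize}
\item The degrees are preserved, which was noted right after the definition of the operation, so $\Gamma_i$ stays $k$-regular.
\item $\Gamma_i$ is connected. It contains the new edges $x_{i-1}x_i$ and $y_{i-1}y_i$ joining the connected graph $\Gamma_{i-1}$ to $G^{(i)}$. Deleting the single edge $x_iy_i$ from $G^{(i)}$ leaves each remaining vertex attached to $x_i$ or to $y_i$, so $\Gamma_i$ is connected.
\item The girth of $\Gamma_i$ is exactly $g$.
\end{itemize}

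The girth is the step that needs care. The argument preceding the proposition shows that it is at least $g$, provided both pieces have girth $g$ and the path estimate holds. That estimate says any $x$–$y$ path avoiding the edge $xy$ has length at least $g-1$, which holds in any graph of girth at least $g$; so the induction hypothesis that $\Gamma_{i-1}$ has girth at least $g$ is enough.

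For the reverse inequality we need some girth cycle to survive. Choose the switched edge in each copy so that a $g$-cycle avoids it. For instance, $G^{(1)}$ has a $g$-cycle $C$; if every copy of $G$ used only one switched edge this might destroy $C$. To be safe, note that when $n\ge 2$, each copy $G^{(i)}$ with $2\le i\le n-1$ loses two edges, while $G^{(n)}$ loses only one. If $k\ge 3$, a single edge cannot lie on all $g$-cycles through every vertex. Simpler still: pick in $G^{(n)}$ a switched edge $x_ny_n$ not on some fixed $g$-cycle. This is possible unless every edge lies on every $g$-cycle, which forces $G$ to be a cycle, i.e.\ $k=2$. That cycle $g$-cycle then persists in $\Gamma_n$, so the girth is exactly $g$.

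The case $k=2$ is handled directly: the $(2,g)$-graph $G$ is $C_g$ (or a union of cycles of length at least $g$, one of length $g$), and $C_{ng}$ is not of girth $g$. So for $k=2$ one takes the disjoint union of $n$ copies of $C_g$ if disconnected graphs are allowed. Otherwise the statement is meant for $k\ge 3$, where the above works.

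The main obstacle is precisely this preservation of a $g$-cycle: ensuring that the switched edges avoid at least one girth cycle in some copy. The choice of a non-switched edge set in the last copy resolves it for $k\ge3$.
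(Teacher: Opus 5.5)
Your proof is correct and is essentially the paper's: attach copies of $G$ one at a time by switching, get girth $\ge g$ from the path estimate, and choose the switched edge $xy$ off a fixed girth cycle $C$ (possible since $k\ge 3$) so that the copy of $C$ in the last attached copy survives. Your remark that for $k=2$ the statement only holds if disconnected graphs are allowed is a fair sharpening of the paper's ``obvious''.

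There is one slip in your connectivity check, which the paper omits entirely. $\Gamma_{i-1}$ also loses its switched edge, and for $i\ge 3$ that edge cannot be $x_{i-1}y_{i-1}$, which is already gone. So if both switched edges are bridges, $\Gamma_i$ splits in two. You can avoid this by switching an edge of the intact copy of $C$ in $G^{(i-1)}$, which cannot be a bridge of $\Gamma_{i-1}$.
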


\begin{proof}
For $k=2$ the statement is obvious. Suppose that $k>2.$ Let $G$ a $(k,g)$-graph of order $v$ and $C$ be 
a girth cycle in $G$. Since $k>2,$ there exists an edge $xy$ of $G$ that is not contained in $C$. Take two copies of $G$ and create their connection through switching the two copies of $e$. The resulting graph,
$\Gamma _2$ is $k$-regular of order $2v$ and its girth is at least $g.$ Since $\Gamma _2$ contains $C$,
its girth is $g$. In the same way, for $i>1$ the connection of $G$ and $\Gamma _i$ through switching $e$ and an arbitrary edge of $\Gamma _i$ results in $\Gamma _{i+1}$, that is a $k$-regular of order 
$(i+1)v$ and of girth $g$. This proves the statement by induction.
\end{proof}

\begin{theorem}
\label{2ossze}
Let $2\leq r<s$, and $ 3\leq g$ be integers.  
Then there exists an $(r,s;g)$-babi-graph for any triple of parameters $(r,s;g)$, and its order is bounded by: 
$$n_{bb}(r,s;g)\leq 32\left( \sum_{t=1}^{g-2}(r-1)^t\right) \left( \sum_{t=1}^{g-2}(s-1)^t\right) .$$
\end{theorem}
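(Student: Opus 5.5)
Start from an $(r,g)$-graph $G_r$ and an $(s,g)$-graph $G_s$, both of order at most $2\sum_{t=1}^{g-2}(k-1)^t$ for the relevant $k$. The plan is to take suitable multiples of each via Proposition \ref{sokszoroz} so that the two pieces have the same number of vertices, and then join them by a connection through switching. This yields a graph with exactly as many thin as fat vertices and girth $g$.

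\textbf{Step 1: existence and size of the building blocks.} For $k\ge 3$ and $g\ge 3$, a $(k,g)$-graph exists by the classical Erd\H{o}s--Sachs theorem. Its standard proof gives a minimal-order graph of order at most $4\sum_{t=1}^{g-2}(k-1)^t$; I will quote this bound and adjust constants if the cited version differs. For $k=2$ (possible only when $r=2$), take the $g$-cycle, whose order $g$ is at most $2\sum_{t=1}^{g-2}1^t=2(g-2)$ once $g\ge 4$, and at most $4(g-2)$ in all cases.

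Write $a=|V(G_r)|$ and $b=|V(G_s)|$. By Proposition \ref{sokszoroz} (for $r=2$, disjoint cycles connected by switching), there are $(r,g)$-graphs of order $ba$ and $(s,g)$-graphs of order $ab$. Call them $H_r$ and $H_s$. Each has $ab$ vertices, and $ab\le 16\,\Sigma_r\Sigma_s$, where $\Sigma_k=\sum_{t=1}^{g-2}(k-1)^t$.

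\textbf{Step 2: joining.} Choose an edge $x_1y_1$ of $H_r$ and an edge $x_2y_2$ of $H_s$, and form their connection through switching. Degrees are preserved, so the result has $ab$ vertices of degree $r$ and $ab$ of degree $s$. It is connected, and by the argument preceding Proposition \ref{sokszoroz} its girth is at least $g$. To ensure the girth is exactly $g$, choose $x_1y_1$ off some girth cycle of $H_r$; this is possible whenever $r\ge 3$, or when $r=2$ and $H_r$ has at least two cycles. Then that girth cycle survives. The total order is $2ab\le 32\,\Sigma_r\Sigma_s$, which is the claimed bound.

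\textbf{Main obstacle.} The delicate step is Step 1: pinning down a citable upper bound for $(k,g)$-graphs of the form $c\sum_{t=1}^{g-2}(k-1)^t$ with $c\le 4$, including the edge case $k=2$. If only a weaker bound is available, the constant $32$ would change. For instance, with the Erd\H{o}s--Sachs-type bound $2\sum_{t=1}^{g-2}(k-1)^t$ the construction even gives $8\Sigma_r\Sigma_s$, so there is slack. A secondary point is the $r=2$ girth-preservation issue. It is handled because $b\ge 2$ forces $H_r$ to contain at least two $g$-cycles, so one of them avoids the switched edge.
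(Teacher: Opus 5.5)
The overall construction matches the paper's: bring an $(r,g)$-graph and an $(s,g)$-graph to the common order $v_rv_s$ with Proposition \ref{sokszoroz}, connect them through switching, and bound $2v_rv_s$ by the Erd\H{o}s--Sachs estimate $n(k,g)\le 4\sum_{t=1}^{g-2}(k-1)^t$. That estimate is exactly what the paper cites, so the constant $32$ is not an obstacle, and for $r\ge 3$ your argument is fine. The step that does not work as written is $r=2$. The reason is that you preserve a girth cycle of the \emph{thin} part $H_r$.

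A connected $2$-regular graph is a single cycle. So ``$H_r$ contains at least two $g$-cycles'' forces $H_r$ to be $b\ge 2$ disjoint copies of $C_g$. One switching with $H_s$ merges only one of them, and the other $b-1$ cycles remain separate components, which contradicts your claim that the result is connected. If instead you actually connect the copies of $C_g$ by switching, as your parenthetical says, then switching $C_m$ with $C_g$ produces the single cycle $C_{m+g}$. So $H_r$ becomes $C_{ab}$, which has no $g$-cycle at all, and your girth argument has nothing to preserve.

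The paper avoids this by keeping the girth cycle on the \emph{fat} side. Since $s>r\ge 2$, $H_s$ always has a girth cycle $C$ and an edge $e\notin C$, and one switches $e$ with an arbitrary edge of the thin part. Then $C$ survives. The crossing-cycle estimate only needs the thin part to have girth at least $g$: for $C_{ab}$, the path joining the switched endpoints has length $ab-1\ge g-1$.

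Hence for $r=2$ you may take $H_r=C_{ab}$ directly. Switching it with $e$ gives a connected graph with $ab$ vertices of degree $2$, $ab$ of degree $s$, and girth exactly $g$. With this one change your proof goes through.
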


\begin{proof}
It was proven by Sachs \cite{101}, that there exist $(k,g)$-cages for any pair of parameters $(k,g)$.
Let $R$ be an $(r,g)$-cage of order $v_r$ and $S$ be an $(s,g)$-cage of order $v_s$. 
By Lemma \ref{sokszoroz}, we can create an $(r,g)$-graph $G_r$ and an $(s,g)$-graph $G_s$, both of order
$v_rv_s$. Let $C$ be a girth cycle in $G_s$ and $e$ be an edge of $G_s$ that is not contained in $C$
(since $s>2$, $e$ exists). Then the connection of $G_r$ and $G_s$ through an arbitrary edge of $G_r$ and $e$ is an $(r,s;g)$-babi-graph of order $2v_rv_s$.

A general upper bound on the size of a $(k,g)$-cage for all $2\leq k$ and 
$3\leq g$ was proven by Erd\H os and Sachs \cite{44}:
$$n(k,g)\leq 4\sum_{t=1}^{g-2}(k-1)^t.$$
Thus, our construction presents the upper bound 
$$n_{bb}(r,s;g)\leq 2v_rv_s\leq 
32\left( \sum_{t=1}^{g-2}(r-1)^t\right) \left( \sum_{t=1}^{g-2}(s-1)^t\right) .$$
\end{proof}

In the next theorem, we give a lower bound on the order of $(r,s;g)$-babi-graphs, which is based on the natural generalization of the Moore tree. 

\begin{theorem}\label{lowerboundbabigraph}
Let $2\leq r<s$ be integers.
Then the order of an $(r,s;g)$-babi-graph is at least:
\begin{itemize}
\item
$1+s, \text{ if } g =3,$
\item
$2s, \text{ if } g =4,$
\item
$1+s + 
(rs+(s-r)^2-s)(1+(r-1)+\ldots + (r-1)^{\frac{g-5}{2}}),
\text{ if } g>3 \text{ odd}$,  
\item
$2\left( s+
(rs+(s-r)^2-2s+1)(1+(r-1)+\ldots + (r-1)^{\frac{g-6}{2}})\right)  ,
\text{ if } g>4 \text{ even.}$
\end{itemize}

\end{theorem}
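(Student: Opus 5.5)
The plan is to run the Moore-tree counting argument, but to root the tree at a fat vertex (odd $g$) or at a fat edge (even $g$) that has many fat neighbours. Those fat neighbours contribute $s-1$ new vertices each instead of $r-1$ at the second level. Deeper levels are bounded using only that every vertex has degree at least $r$.

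\textbf{Case $g=3$.} Any fat vertex together with its $s$ neighbours gives at least $1+s$ vertices.

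\textbf{Case $g$ odd, $g>3$.} First find a fat vertex $x$ with $deg_f(x)\ge s-r$. There are $\frac{v}{2}$ fat vertices, and by Proposition \ref{fatedges_Gyuri} we have $\sum deg_f(x)=2f\ge \frac{v}{2}(s-r)$, so the average of $deg_f$ over fat vertices is at least $s-r$. Now grow the tree $\mathcal{T}_x$ of depth $\frac{g-1}{2}$. Girth $g$ guarantees that all vertices at distance at most $\frac{g-1}{2}$ from $x$ along distinct tree paths are distinct, as in the Moore bound.

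Level $1$ has $s$ vertices. If $d=deg_f(x)$, level $2$ has
$$d(s-1)+(s-d)(r-1)=s(r-1)+d(s-r)\ge rs+(s-r)^2-s$$
vertices. Each later level is at least $(r-1)$ times the previous one. Summing over the levels gives exactly the stated bound, with the geometric factor $1+(r-1)+\dots+(r-1)^{\frac{g-5}{2}}$.

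\textbf{Case $g$ even.} Use Proposition \ref{fatedges-2_Gyuri} to obtain a fat edge $xy$ with $a+b\ge 2(s-r)$, where $a=deg_f(x)$ and $b=deg_f(y)$. Build $\mathcal{T}_{xy}$ of depth $\frac{g-2}{2}$ on each side. Levels $0$ and $1$ together contribute $2+2(s-1)=2s$ vertices, which already settles $g=4$.

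At level $2$, $x$ has $a-1$ fat neighbours other than $y$, and $y$ has $b-1$ fat neighbours other than $x$. The level-$2$ count is therefore
$$(s-1)(a+b-2)+(r-1)\bigl(2(s-1)-(a+b-2)\bigr)=2(s-1)(r-1)+(s-r)(a+b-2).$$
This is nondecreasing in $a+b$, so it is at least $2\bigl(rs+(s-r)^2-2s+1\bigr)$. Further levels again multiply by at least $r-1$, which gives the factor $1+\dots+(r-1)^{\frac{g-6}{2}}$.

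\textbf{Main obstacle.} The delicate point is making the level-$2$ estimate honest. The second-level count must depend only on $deg_f$ of the root(s), and vertex distinctness must follow from the girth. For distinctness, a coincidence among vertices at depth at most $\frac{g-1}{2}$ (respectively within the two half-trees of depth $\frac{g-2}{2}$) would create a cycle shorter than $g$. Monotonicity in $d$ or $a+b$ then lets us substitute the lower bounds from Propositions \ref{fatedges_Gyuri} and \ref{fatedges-2_Gyuri}.
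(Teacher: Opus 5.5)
Your proposal is correct and follows the paper's proof essentially step by step. In both, the Moore tree is rooted at a fat edge $xy$ with $deg_f(x)+deg_f(y)\geq 2(s-r)$ (even $g$) or at a fat vertex with $deg_f(x)\geq s-r$ (odd $g$); fat level-1 vertices contribute $s-1$ each, and every later level grows by a factor of at least $r-1$. The only cosmetic difference is in the odd case: you obtain the fat vertex by averaging $deg_f$ directly via Proposition \ref{fatedges_Gyuri}, whereas the paper takes the fat edge from Proposition \ref{fatedges-2_Gyuri} and assumes without loss of generality that $deg_f(x)\geq s-r$.
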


\begin{proof}
Let $G$ be an $(r,s;g)$-babi-graph. 
For any two vertices $x$ and $y$ let $d(x,y)$ denote their minimal-path-length distance in $G$. 
For a vertex $x$ and a non-negative integer $i$ we let $G_i(x) = \{y \in V(G) \colon d(x,y)=i\}$.  For an edge $xy$ of $G ,$ let $D_i^j(x,y)=G_i(x)\cap G_j(y).$ 

First, suppose that $g$ is even.
Take a fat edge $xy$ such that $deg_f(x)+deg_f(y)\geq 2(s-r)$ (by Proposition \ref{fatedges-2_Gyuri},
$xy$ exists) and create the Moore tree ${\cal{T}}_{xy}$. For $g=4$ the tree ${\cal{T}}_{xy}$ has 
$2+2(s-1)=2s$ vertices, this proves the statement. If $g>4$, then  ${\cal{T}}_{xy}$ contains $x,y$ the
vertices in $D_1^2(x,y)\cup D_1^2(y,x)$ and the vertices in $D_2^3(x,y)\cup D_2^3(y,x)$. Since
\begin{equation}
\label{tree-4}
|D_1^2(x,y)\cup D_1^2(y,x)| = 2(s-1)
\end{equation}
 and 
$D_1^2(x,y)\cup D_1^2(y,x)$ contains $deg_f(x)+deg_f(y)-2\geq 2(s-r-1)$ fat vertices, we get
$$|D_2^3(x,y)\cup D_2^3(y,x)|\geq   2(s-r-1)(s-1)+2r(r-1)=2(rs+(s-r)^2 -2s +1).$$
For $i=3,4,\dots ,\frac{g-2}{2}$ the inequality
\begin{equation}
\label{tree-estimate-even}
|D_i^{i+1}(x,y)\cup D_i^{i+1}(y,x)|\geq   (r-1)|D_{i-1}^{i}(x,y)\cup D_{i-1}^{i}(y,x)|
\end{equation}
obviously holds, hence
\begin{equation}
\label{tree-even}
|D_i^{i+1}(x,y)\cup D_i^{i+1}(y,x)|\geq   2(rs+(s-r)^2 -2s +1)(r-1)^{i-2}.
\end{equation}
The vertex-set of the tree ${\cal{T}}_{xy}$ consists of $x,y$ and  $D_i^{i+1}(x,y)\cup D_i^{i+1}(y,x)$ for
$i=1,2,\dots , \frac{g-2}{2}$. Adding Equation (\ref{tree-4}) and Inequalities (\ref{tree-even}), we get the statement of the theorem for $g$ even.

Similar counting works when $g$ is odd. For $g=3$ the statement is immediate. If $g>3$, then again
take a fat edge $xy$ with $deg_f(x)+deg_f(y)\geq 2(s-r)$. 
We may assume that $deg_f(x)\geq s-r$. Create the Moore tree ${\cal{T}}_{x}$. 
Since $x$ is a fat vertex, 
\begin{equation}
\label{tree-3}
|G_1(x)| =s,
\end{equation}
it has at least $s-r$ fat neighbours, hence
\begin{equation}
\label{tree-5}
|G_2(x)|\geq (s-r)(s-1)+r(r-1)=rs+(s-r)^2-s
\end{equation}
and
\begin{equation}
\label{tree-odd}
|G_i(x)|\geq (r-1)|G_{i-1}(x)|\geq (rs+(s-r)^2-s)(r-1)^{i-2}
\end{equation}
holds for $i=3,4,\dots , \frac{g-1}{2}$. 
The vertex-set of the tree ${\cal{T}}_{x}$ consists of $x$ and $G_i(x)$ for
$i=1,2,\dots , \frac{g-1}{2}$. Adding Equation (\ref{tree-3}), Inequality  (\ref{tree-5}) and Inequalities (\ref{tree-odd}), we get the statement of the theorem for $g$ odd.
\end{proof}

Let us remark that if equality occurs in Theorem \ref{lowerboundbabigraph} for $g>4$, then 
the graph has no thin edge. If there is no thin edge, then Inequalities (\ref{tree-estimate-even})
and (\ref{tree-odd}) cannot be tight for $i\geq 3,$ because all neighbours of thin vertices must
be fat. So equality cannot occur if $g>6$.
In the next theorem, we examine the case of equality when $g=5$ and $6.$

\begin{theorem}\label{equal-5-6}
Let $2\leq r<s$ be integers.  
\begin{itemize}
\item
If $n_{bb}(r,s;5)=rs+(s-r)^2+1$, then $r=2$ and $s=3$.
\item
If $n_{bb}(r,s;6)=2(rs+(s-r)^2-s+1)$, then $1.935r < s < 2r-2$.
\end{itemize}
\end{theorem}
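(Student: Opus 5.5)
The plan is to analyse what equality in Theorem~\ref{lowerboundbabigraph} forces on the structure of $G$, and then count fat and thin vertices inside the Moore tree. Since equality means the tree is the whole graph, these two counts must both equal $v/2$, and that equation in $r,s$ decides everything.

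\emph{Step 1: the structure forced by equality.} Let $G$ be an $(r,s;g)$-babi-cage with $g\in\{5,6\}$ attaining the bound, and write $d=s-r\ge 1$. Every inequality in the proof of Theorem~\ref{lowerboundbabigraph} must then be tight, and the Moore tree ${\cal T}_x$ (for $g=5$) or ${\cal T}_{xy}$ (for $g=6$) must contain every vertex of $G$. I will show three facts.
\begin{itemize}
\item There is no thin edge, so $t=0$. In a tree as above, a thin vertex of the second layer with a thin neighbour in the third layer makes the count strictly smaller than the bound, and then some vertex lies outside the tree. It is cleanest to deduce this from the remark after Theorem~\ref{lowerboundbabigraph}.
\item By (\ref{fatedgenumber}), $f=\frac{v}{4}(s-r)$, so the average of $deg_f$ over the $v/2$ fat vertices is exactly $s-r$.
\item Every fat vertex has $deg_f=s-r$. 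The lower bound on the second (resp.\ third) layer is $(\#\text{fat})(s-1)+(\#\text{thin})(r-1)$, which is strictly increasing in the number of fat vertices because $s>r$. So a root $x$ with $deg_f(x)>s-r$ (for $g=5$), or a fat edge with $deg_f(x)+deg_f(y)>2(s-r)$ (for $g=6$), gives a tree with more than $v$ vertices. Hence no fat vertex has $deg_f>s-r$. Since the average is $s-r$, all fat vertices have $deg_f=s-r$.
\end{itemize}
So the fat vertices induce an $(s-r)$-regular graph, and each thin vertex has all $r$ of its neighbours fat.

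\emph{Step 2: the case $g=5$.} Because equality holds, the tree ${\cal T}_x$ from a fat root covers $G$, so we may count its fat and thin vertices layer by layer:
\[
\#\text{fat}=1+d+d(d-1)+r(r-1),\qquad \#\text{thin}=r+dr .
\]
Setting both equal to $v/2$ gives $d^2-rd+(r-1)^2=0$. Its discriminant $r^2-4(r-1)^2$ is nonnegative only when $r\le 2$. Hence $r=2$, then $d=1$, and so $s=3$.

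\emph{Step 3: the case $g=6$.} Do the same for ${\cal T}_{xy}$ with $xy$ a fat edge. Each half of the tree contains $1+(d-1)+(d-1)^2+r(r-1)$ fat vertices and $r+(d-1)r$ thin vertices. Equating the two counts gives
\[
d^2-(r+1)d+r^2-r+1=0,
\]
whose discriminant is $-3(r-1)^2<0$ for $r\ge 2$. So no such graph exists. The stated condition $1.935r<s<2r-2$ then holds vacuously, and the statement follows a fortiori.

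\emph{Main obstacle.} The delicate point is Step~1. I must justify carefully that tightness forces $t=0$ and $deg_f\equiv s-r$ \emph{globally}, not just along the chosen edge, and that the tree really exhausts $V(G)$ with the layers pairwise disjoint, which uses the girth. Because Step~3 yields a conclusion strictly stronger than the stated one, this is exactly where I would recheck the argument. If the global-regularity claim turns out to be too strong, I would fall back on the weaker consequences $t=0$ and $\sum_{x}deg_f(x)^2$ bounded via (\ref{average}). That route leads to a quadratic inequality in $s/r$ rather than an equation, and I expect that is how the constants $1.935$ and $2r-2$ arise.
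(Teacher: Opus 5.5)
You reach the right conclusion for $g=6$, and it is stronger than the paper's: the bound is never attained, so the second bullet holds vacuously. Your $g=5$ case is essentially the paper's argument. But Step~1, as written, does not prove the global regularity you need for $g=6$.

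\textbf{The gap.} From ``no fat edge has $deg_f(x)+deg_f(y)>2(s-r)$'' you cannot conclude ``no fat vertex has $deg_f>s-r$''. A fat vertex with $deg_f=s-r+e$ whose fat neighbours all have $deg_f=s-r-e$ violates nothing you have shown. This is exactly the configuration the paper keeps open through its imbalance parameter $d$, with $deg_f(x)=s-r-d$ and $deg_f(y)=s-r+d$.

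Your justification of $t=0$ is also wrong. For $g=5,6$ the trees have depth $2$. The last layer has exactly $\sum_w(\deg(w)-1)$ vertices, summed over the first layer. So a thin--thin edge does not shrink the tree; it only changes the fat/thin split. The remark after Theorem~\ref{lowerboundbabigraph} asserts $t=0$ without proof, so citing it does not close this.

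\textbf{The fix: the equality case of (\ref{average}).} You list this route only as a fallback and expect it to give merely an inequality; in fact it gives exact regularity.
The size of ${\cal T}_{xy}$ is strictly increasing in $\sigma(xy)=deg_f(x)+deg_f(y)$ and equals $v$ at $\sigma=2(s-r)$. So equality forces $\sigma(xy)\le 2(s-r)$ for \emph{every} fat edge. Hence $\sum_x deg_f(x)^2=\sum_{xy}\sigma(xy)\le 2(s-r)f$. Cauchy--Schwarz over the $v/2$ fat vertices gives $\sum_x deg_f(x)^2\ge 8f^2/v$. Together these give $f\le\frac v4(s-r)$. By (\ref{fatedgenumber}) this forces $t=0$, and then Cauchy--Schwarz is tight, i.e.\ $deg_f\equiv s-r$.
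For $g=5$ you only need to reorder your argument. Rooting at an arbitrary fat vertex gives $deg_f\le s-r$ everywhere. The mean of $deg_f$ is $s-r+4t/v$, which then forces both $t=0$ and regularity.

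\textbf{Comparison with the paper.} With this patch your proof is complete and takes a cleaner route than the paper's. The paper's $d$ must vanish. Your quadratic $(s-r)^2-(r+1)(s-r)+r^2-r+1=0$ has discriminant $-3(r-1)^2<0$, which rules out every $r\ge2$. The paper instead bounds a nonzero $d$ through (\ref{diophantine}) to obtain the range $1.935r<s<2r-2$. Expanding the paper's own fat count actually gives $s^2-(3r+1)s+3r^2+1-2d^2=0$, with $3r^2$ rather than $4r^2$; at $d=0$ this is exactly your equation.
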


\begin{proof}
Let $G$ be an $(r,s;g)$-babi-graph. 
It follows from the proof of Theorem \ref{lowerboundbabigraph} that in the case of equality
$G$ has no thin edge, and $deg_f(x)+deg_f(y)=2(s-r)$ for any fat edge $xy$. 

First, consider the case $g=5$. If the order of $G$ is $rs+(s-r)^2+1$, then equality holds in 
(\ref{tree-5}), so $deg_f(w)=s-r$ for all fat vertex $w$. If $x$ is a fat vertex, then $G_1(x)$
contains $s-r$ fat and $r$ thin vertices. There is no thin edge, hence in $G_2(x)$ there are 
$$(s-r)(s-r-1)+r(r-1)=(s-r)^2+r^2-s$$
fat vertices. So the total number of fat vertices in $G$ is 
$$1+(s-r)+(s-r)^2+r^2-s=(s-r)^2+r^2-r+1.$$
On the other hand, half of the vertices are fat, thus
$$(s-r)^2+r^2-r+1=\frac{rs+(s-r)^2+1}{2}.$$
The solution of this quadratic equation is
$$s=\frac{3r\pm \sqrt{-3r^2+8r-4}}{2}.$$
The discriminant is negative if $r>2.$ If $r=2$, then $s=3$ and $v=8$.

The following construction shows the existence of a $(2,3;5)$-babi-cage of order $8$.
Let $\Gamma$ be the Petersen graph and take an edge $v_1v_2$ of it. Delete the vertices $v_i$ and the edges emanating from $v_i$ for $i=1,2.$ The neighbours of
the $v_i$s are pairwise different vertices, because the girth of $\Gamma $ is 5. 
The obtained graph $\Gamma '$ has 8 vertices, 4 of them (the
remaining neighbours of the $v_i$s) have degree 2, and 4 of them have degree 3.
So $\Gamma $ is a $(2,3;5)$-babi-cage.

\smallskip
Now, consider the $g=6$ case. Let $xy$ be a fat edge of $G$. We may assume without loss of generality
that $1\leq deg_f(x)=s-r-d$ and $s\geq deg_f(y)=s-r+d$ where $d$ is a non-negative integer, $d\leq r$ and $d\leq s-r-1$. Then for any fat neighbour $w$ of $x$ we have 
$$deg_f(w)=2(s-r)-deg_f(x)=s-r+d.$$
There are $s-r-d-1$ fat and $r+d$ thin vertices in $D_1^2(x,y)$. Since $G$ has no thin edge,
$D_2^3(x,y)$ contains
 $$(s-r-1-d)(s-r-1+d)+(r+d)(r-1)=(s-r-1)^2-d^2+(r+d)(r-1)$$
fat vertices. In the same way, we get that there are $s-r+d-1$ fat vertices in $D_1^2(y,x)$ and 
$$(s-r-1)^2-d^2+(r+d)(r-1)$$ 
fat vertices in $D_2^3(y,x)$. So the total number of fat vertices in $G$ is

$$2\left( (s-r)+(s-r-1)^2+r(r-1)-d^2 \right).$$
On the other hand, half of the vertices are fat, thus
$$2\left( (s-r)+(s-r-1)^2+r(r-1)-d^2 \right)=rs+(s-r)^2-s+1.$$
Rearranging this equation, we get
\begin{equation}
\label{diophantine}
s^2-(3r+1)s+4r^2-2d^2+1=0.
\end{equation}

The solution of this quadratic equation is
$$s=\frac{3r+1\pm \sqrt{8d^2-7r^2+6r-3}}{2}.$$
The discriminant is non-negative if 
$$d^2\geq \frac{7}{8}\left( \left( r-\frac{3}{7}\right) ^2+\frac{12}{49}\right) .$$
Thus 
$$d>\sqrt{\frac{7}{8}}\left( r-\frac{3}{7}\right)> 0.935r-0.401 .$$
Since $s-r-1\geq d,$ we get $s>1.935r.$
On the other hand, $d\leq r$ implies 
$$8d^2-7r^2+6r-3\leq r^2+6r-3 < (r+3)^2,$$
hence 
$$s < \frac{3r+1+(r+3)}{2}=2r+2.$$

If $s=2r+h$, then from Equation (\ref{diophantine}) we get
$d^2=r^2+\frac{h-2}{2}r+\frac{h^2-h+1}{2}$. Since the inequalities 
$$r^2-2r+1< r^2+\frac{h-2}{2}r+\frac{h^2-h+1}{2}< r^2$$
hold for $h=-2,-1,0$ and $1$, in these cases $r-1<d<r$, so Equation (\ref{diophantine}) has no integer solution. This proves the second inequality of the statement.
\end{proof}

\section{Balanced biregular cages of girth $3$ and $4$}\label{girth3and4}

In this section, we give the exact orders of babi-cages when $g\in \{3,4\}$. 
Recall that, the solution for the classical problem in these cases is easy, for any $k\geq 2$, the $(k,3)$-cage is the complete graph $K_{k+1}$, and the $(k,4)$-cage is the complete bipartite graph $K_{k,k}$.


\begin{theorem}
Let $2\leq r<s$ be integers. Then 
$$n_{bb}(r,s;3)=2(s-r+1), \text{ if }\frac{s+1}{2} \geq r.$$
If $r>\frac{s+1}{2} $, then
$$n_{bb}(r,s;3)=
\begin{cases}
s+1, \text{ if } s\equiv 3\pmod 4, \text { or } s\equiv 1\pmod 4 \text{ and } r \text{ is odd}, \\
s+2, \text{ if }  s\equiv 2\pmod 4, \text { or } s\equiv 0\pmod 4   \text{ and } r \text{ is even}, \\
s+3, \text{ if } s\equiv 1\pmod 4 \text{ and } r \text{ is even}, \\
s+4, \text{ if } s\equiv 0\pmod 4  \text{ and } r \text{ is odd}.
\end{cases}
$$
\end{theorem}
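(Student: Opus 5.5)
The plan is to prove a general lower bound by counting, strengthen it by parity in the second regime, and then give matching constructions. Throughout, let $G$ be an $(r,s;3)$-babi-graph of order $n=2m$, so it has $m$ fat and $m$ thin vertices.

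\emph{Lower bounds.} A fat vertex has $s$ neighbours, so $n\geq s+1$. I would also count the mixed edges. The fat vertices have total degree $ms$. At most $m(m-1)$ of this is used by fat edges, so there are at least $ms-m(m-1)=m(s-m+1)$ mixed edges. The thin vertices can absorb at most $mr$ of them, so $m(s-m+1)\leq mr$, that is $m\geq s-r+1$ and $n\geq 2(s-r+1)$. When $\frac{s+1}{2}\geq r$ we have $2(s-r+1)\geq s+1$, so this bound dominates.

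When $r>\frac{s+1}{2}$, the bound $n\ge s+1$ dominates and I would add two parity conditions: $n$ is even, and the degree sum $m(r+s)$ is even. The bound is then the smallest even $n=2m\geq s+1$ with $m(r+s)$ even. Going through the residues of $s$ modulo $4$ gives the four cases.
\begin{itemize}
\item If $s\equiv 3\pmod 4$, then $m=(s+1)/2$ is even, so $n=s+1$.
\item If $s\equiv 1\pmod 4$, then $m=(s+1)/2$ is odd, so we need $r+s$ even, i.e.\ $r$ odd, to get $n=s+1$. Otherwise we jump to $n=s+3$, where $m$ is even.
\item If $s\equiv 2\pmod 4$, then $n=s+2$ and $m$ is even.
\item If $s\equiv 0\pmod 4$, then $n=s+2$ with $m$ odd, which requires $r$ even. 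Otherwise $n=s+4$.
\end{itemize}

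\emph{Construction for $r\leq\frac{s+1}{2}$.} Set $m=s-r+1\geq r$. Take the fat vertices to form a $K_m$, so each has $s-r$ fat neighbours. Join the fat and thin sets, both of size $m$, by an $r$-regular bipartite graph, for example a circulant, which exists since $r\leq m$. Every fat vertex then has degree $(m-1)+r=s$, and every thin vertex has degree $r$. The girth is $3$ as soon as $m\geq 3$, because of the triangles in $K_m$. The only exception is $m=2$, which forces $r=2,s=3$; there I would take $K_4$ minus an edge directly.

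\emph{Construction for $r>\frac{s+1}{2}$.} Here I would build the complement. On $n$ vertices it should have $m$ vertices of degree $n-1-s\in\{0,1,2,3\}$ and $m$ vertices of degree $n-1-r$. These sequences are sparse, so I would realize them explicitly in each residue case. One option is perfect matchings or unions of cycles on the thin side, plus a few edges attached to fat vertices when $n-1-s>0$. Alternatively I would check the Erd\H{o}s--Gallai conditions, which are easy because the degrees are small relative to $n$. Finally, the girth is $3$ by Mantel's theorem: since $r>\frac{s+1}{2}$ and $n\leq s+4$, the number of edges $m(r+s)/2$ exceeds $n^2/4$ except in a few tiny cases, which I would check by hand. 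I expect the main obstacle to be this last step: writing down explicit complement realizations in all four residue cases, especially when $n=s+3$ or $n=s+4$ and the fat vertices must miss $2$ or $3$ edges, while keeping the degree parity consistent.
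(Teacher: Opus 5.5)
\textbf{Verdict: there is a genuine gap.} The case $r>\frac{s+1}{2}$ has no upper-bound construction.

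\textbf{What is correct.} Your lower-bound argument is sound and is the paper's in slightly different form. Counting mixed edges from the fat side against the capacity $mr$ of the thin side is equivalent to combining $f=\frac{v}{4}(s-r)+t$ with $f\le\binom{v/2}{2}$. Your parity analysis (order even, $m(r+s)$ even) matches the four residue cases. Your construction for $r\le\frac{s+1}{2}$ is also the paper's: $K_{s-r+1}$ on the fat side plus a circulant $r$-regular bipartite graph.

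\textbf{The gap.} When $r>\frac{s+1}{2}$, no graph is actually produced, and neither route you sketch is yet an argument.
\begin{itemize}
\item Your justification for Erd\H{o}s--Gallai, that the complement degrees are small relative to $n$, does not hold for the thin vertices. Their complement degree $n-1-r$ is close to $n/2$ when $r$ is near $\frac{s+1}{2}$; for example, $s=9$, $r=6$, $n=12$ gives complement degree $5$.
\item The explicit route leaves open exactly the choice that matters. The $n-1-s$ missing edges at fat vertices cannot in general all go to other fat vertices. For $s\equiv 0\pmod 4$, $r$ even, $n=s+2$, there are $m=\frac{s+2}{2}$ fat vertices, an odd number, and each must miss exactly one edge. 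Placing these among the fat vertices would require a perfect matching on an odd number of vertices.
\end{itemize}

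\textbf{How to close it (the paper's construction).} Send all these missing edges across the bipartition.
\begin{itemize}
\item Start from $K_n$ with fat set $U$ and thin set $W$, $|U|=|W|=m$. Delete $n-1-s\le 3$ of the $m$ perfect matchings of the complete bipartite graph between $U$ and $W$. This lowers every degree to $s$ with no parity condition.
\item Then delete an $(s-r)$-regular spanning subgraph of the clique on $W$: $s-r$ one-factors if $m$ is even, or $\frac{s-r}{2}$ Hamilton cycles if $m$ is odd.
\item In the odd case, $s-r$ is even precisely because $m(r+s)$ is even. So the parity condition from your lower bound is exactly what makes the construction exist.
\item The requirement $s-r\le m-1$ follows from $s-r<\frac{s-1}{2}\le m-1$.
\item Since $U$ stays a clique on $m\ge 3$ vertices, girth $3$ is immediate. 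Mantel is then unnecessary; its hypothesis $r+s>n$ fails only for $(r,s)=(3,4)$, $n=8$, which you would otherwise have to check by hand.
\end{itemize}
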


\begin{proof}
Let $G$ be an $(r,s;3)$-babi-graph of order $v.$ 
The number of fat edges is at most ${v/2 \choose 2},$ so by Proposition
\ref{fatedges_Gyuri} we have 
$${v/2 \choose 2}=\frac{1}{2}\frac{v}{2}\left( \frac{v}{2}-1\right) \geq \frac{v(s-r)}{4}.$$
This implies $v\geq 2(s-r+1).$

In the case $s+1 \geq 2r$ we construct an $(r,s;3)$-babi-cage of order $2(s-r+1)\geq s+1$. 
Let the vertices of $\Gamma $ be 
$$\{ u_0,u_1,\dots ,u_{s-r}\}  \cup  \{ w_0,w_1,\dots ,w_{s-r}\}  .$$
We define the edges of $\Gamma $ in the following way:  \\
i) Create all edges of type $u_iu_j$ for $0\leq i,j\leq s-r$. These edges and the corresponding vertices form a complete graph $K_{s-r+1}$ on the vertices $u_i$. \\
ii) Add edges of type $w_iu_{i+j}$ for all $0\leq i\leq s-r $ and all $0\leq j \leq r-1$ (the subscripts are calculated modulo $s-r+1$).   \\
By definition, the order of $\Gamma $  is $2(s-r+1)$.
Since $s-r+1\geq r$, the degrees of the vertices $u_i$ are $(s-r)+r=s$ and 
the degrees of the vertices $w_i$ are $r.$ So $\Gamma $ is an 
$(r,s;3)$-babi-cage of order $2(s-r+1)$.

Now, suppose that $r>\frac{s+1}{2} $.
The simple graph $G$ has some vertices of degree $s$, thus it has at least $s+1$
vertices. The order of $G$ is always even, and is divisible by $4$ if $s-r$ is odd. 
Hence $n_{bb}(r,s;3)\geq s+1, \, s+2, \, s+3$ or $s+4$ according to the cases listed in the statement of the theorem.

We construct an $(r,s;3)$-babi-cage of order $v=s+i$ in the $i$-th case listed in the statement. Notice that $v$ is even. Take 
the complete graph $K_{s+i}$ and divide its vertices into to subsets:
$$U=\{ u_1,u_2,\dots ,u_{v/2}\}  \text{ and }  W=\{ w_1,w_2,\dots ,w_{v/2}\}  .$$
The edge set of $K_{s+i}$ can be divided into three subsets. The edges of a complete graph
whose vertex set is $U$, the edges of a complete graph
whose vertex set is $W$, and the edges of a complete bipartite graph whose bipartitions are $U$
and $W$. Let   
$E_U=\{ u_iu_j \colon 1 \leq i\neq j \leq v/2 \} , E_W=\{ w_iu_w \colon 1 \leq i\neq  j \leq v/2 \} $, \\
$E_{UW}=\{ u_iw_j \colon 1 \leq i\neq j \} .$

 It is well-known that the complete graph $K_n$ can be decomposed into $n-1$ 1-factors if $n$ is even and into $\frac{n-1}{2}$ Hamilton cycles if $n$ is odd, and the complete bipartite graph $K_{n,n}$ can be decomposed into $n$ perfect matchings.

Decompose $E_{UW}$ into $v/2$ perfect matchings and in the $i$-th case delete $i-1$ of these matchings. After this deletion, the degree of each vertex becomes $s$. In the next step, we decrease
the degrees of the vertices in $W$ by $s-r$. We can easily do it if $v/2$ is even: decompose $E_W$ into
1-factors and delete $s-r$ of them. If $v/2$ is odd (the cases $s\equiv 1\pmod 4$ and $r$ is odd, and 
 $s\equiv 2\pmod 4$ and $r$ is even), then $s-r$ is even. We can decompose $E_W$ into
Hamilton cycles and delete $(s-r)/2$ of them. After the second step, the degree of each vertex 
in $W$ becomes $s-(s-r)=r$, the degrees of vertices in $U$ do not change, so we get 
an $(r,s;3)$-babi-cage of order $v=s+i$.
\end {proof}

\begin{theorem}
Let $2\leq r<s$ be integers. Then 
$$n_{bb}(r,s;4)=
\begin{cases}
4(s-r), \text{ if } s \geq 2r,  \\ 
2s, \text{ if } 2r>s \text{ and } s \text{ is even}, \\ 
2(s+1), \text{ if } 2r>s \text{ and } s \text{ is odd}.
\end{cases}
$$
\end{theorem}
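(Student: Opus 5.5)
The plan is to prove matching lower and upper bounds in each of the three regimes. A graph has girth $4$ exactly when it is triangle-free and contains a $4$-cycle, so every construction must be checked for both properties.

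For the lower bounds, let $G$ be an $(r,s;4)$-babi-graph of order $v$; note that $v$ is even.
\begin{itemize}
\item The case $g=4$ of Theorem \ref{lowerboundbabigraph} gives $v\ge 2s$. Concretely, for a fat edge $xy$ the neighbourhoods $N(x)$ and $N(y)$ are disjoint independent sets of size $s$.
\item The subgraph induced on the $v/2$ fat vertices is triangle-free. Mantel's theorem bounds its number of edges by $f\le (v/2)^2/4=v^2/16$.
\item Proposition \ref{fatedges_Gyuri} gives $f\ge \frac{v}{4}(s-r)$. Together with Mantel this yields $v\ge 4(s-r)$, which dominates $2s$ when $s\ge 2r$. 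When $2r>s$ the bound $2s$ is the stronger one.
\item For $s$ odd I must exclude $v=2s$. If $v=2s$, the sets $A=N(x)$ and $B=N(y)$ partition $V(G)$ and are independent, so $G$ is bipartite with parts of size $s$. Let $a$ and $b$ be the numbers of fat vertices in $A$ and $B$. Counting edges from both sides gives $as+(s-a)r=bs+(s-b)r$, hence $a=b$. Then the total number of fat vertices is $2a=v/2=s$, so $s$ is even.
\item Consequently, for $s$ odd the evenness of $v$ forces $v\ge 2s+2$.
\end{itemize}

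For the construction when $s\ge 2r$, take four sets $F_1,F_2,T_1,T_2$, each of size $s-r$.
\begin{itemize}
\item Put a complete bipartite graph $K_{s-r,s-r}$ between $F_1$ and $F_2$.
\item Index $T_1=\{t_i\}$ and $F_2=\{f_i\}$ by $\mathbb{Z}_{s-r}$, and join $t_i$ to $f_{i},f_{i+1},\dots,f_{i+r-1}$. Do the same between $T_2$ and $F_1$.
\item This circulant step needs $r\le s-r$, i.e. $s\ge 2r$.
\end{itemize}
Every fat vertex then has degree $(s-r)+r=s$ and every thin vertex has degree $r$. The graph is bipartite with parts $F_1\cup T_1$ and $F_2\cup T_2$, so it is triangle-free. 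It contains a $4$-cycle inside $K_{s-r,s-r}$, since $s-r\ge r\ge 2$. Its order is $4(s-r)$.

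For $2r>s$ with $s$ even, start from $K_{s,s}$ with parts $A$ and $B$. Split each part into $s/2$ fat and $s/2$ thin vertices. Delete an $(s-r)$-regular bipartite graph between the thin half of $A$ and the thin half of $B$. Such a graph exists, for instance as a circulant, because $s-r\le s/2$. The fat vertices keep degree $s$, the thin vertices drop to degree $r$, and the graph remains bipartite. A $4$-cycle survives among the fat vertices, since $s/2\ge 2$ when $s\ge 4$; the small case $s=2$ is impossible because $r\ge 2$ and $r<s$.

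For $2r>s$ with $s$ odd, start from $K_{s+1,s+1}$ and delete a perfect matching, which leaves an $s$-regular bipartite graph. In each part designate $(s+1)/2$ fat and $(s+1)/2$ thin vertices, and arrange the deleted matching to pair fat with fat and thin with thin. Then delete an $(s-r)$-regular bipartite graph between the two thin halves. It must be edge-disjoint from the removed matching; a circulant on $\mathbb{Z}_{(s+1)/2}$ avoiding one shift works, since $s-r\le (s+1)/2-1$ follows from $2r>s+1$ or, in the boundary case, needs a small separate check. This gives order $2(s+1)$.

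I expect the main obstacle to be this last construction: the degree bookkeeping, disjointness from the matching, and the boundary case $2r=s+1$. If the circulant choice fails there, I would first try choosing the removed matching to pair fat vertices with thin ones and adjusting the thin-thin graph accordingly. I must also confirm that a $4$-cycle survives, for example among the fat vertices when $(s+1)/2\ge 3$, with the case $s=3$ handled by inspection.
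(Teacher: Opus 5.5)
Your proposal is correct and follows the paper's proof essentially step by step. It uses the same lower bounds: $2s$ from the Moore tree, $4(s-r)$ from Mantel's theorem together with Proposition~\ref{fatedges_Gyuri}, and the bipartite parity argument ruling out $v=2s$ for odd $s$. It also uses the same constructions in the first two regimes. In the odd case your graph is identical to the paper's $\Gamma_3$: building $\Gamma'_2$ for degree $s+1$ and then deleting a fat--fat perfect matching amounts to removing a fat--fat matching plus an $(s-r+1)$-regular thin--thin subgraph, exactly as you do.

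Two small corrections. First, the boundary case $2r=s+1$ needs no separate check, since $s-r\le\frac{s-1}{2}$ is equivalent to $2r\ge s+1$, which holds throughout the regime $2r>s$ with $s$ odd. Second, for $s=5$ the subgraph induced on the fat vertices is $K_{3,3}$ minus a perfect matching, which is a $6$-cycle, so your suggested $4$-cycle among fat vertices does not exist there. Instead, take two fat vertices of one side and two thin vertices of the other side; fat--thin edges are never deleted, so these four vertices form a $4$-cycle for every odd $s\ge 3$, including $s=3$.
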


\begin{proof}
Let $G$ be an $(r,s;4)$-babi-graph of order $v.$ 
By Theorem \ref{lowerboundbabigraph}, $v\geq 2s.$ 

The fat vertices and fat edges 
form a triangle-free subgraph of $G$. Thus, by Mantell's Theorem,
the number of fat edges is at most $\frac{(v/2)^2}{4},$ so by Proposition
\ref{fatedges_Gyuri} we have 
$$\frac{(v/2)^2}{4}\geq \frac{v(s-r)}{4}.$$
This implies $v\geq 4(s-r).$

First, suppose that $s \geq 2r$. Then $4(s-r)\geq 2s$. We construct an 
$(r,s;4)$-babi-cage of order $4(s-r)$. Let the vertices of $\Gamma _1$ be $U\cup L$, where 
$$
U=\{ u_0,u_1,\dots ,u_{(s-r)-1}\} \cup \{ x_1,x_2,\dots ,x_{s-r}\}, $$ 
$$L= \{ w_0,w_1,\dots ,w_{(s-r)-1}\}  \cup \{ y_1,y_2,\dots ,y_{s-r}\} .
$$
We define the edges of $\Gamma _1$ in the following way:  \\
i) Create all edges of type $x_iy_j$ for $1\leq i,j\leq s-r$. These edges and the corresponding vertices form a complete bipartite graph $K_{s-r,s-r}$. \\
ii) Add edges of type $x_iw_{s-r+i+j}$ for all $1\leq i\leq s-r $ and all $0\leq j \leq r-1$.  \\
iii) Add edges of type $y_iu_{s-r+i+j}$ for all $1\leq i\leq s-r$ and all $0\leq j \leq r-1$. \\ 
By definition, $\Gamma _1$ is a bipartite graph with bipartitions $U$ and $L$, its order is $4(s-r)$, and its girth is $4$. 
The degrees of the vertices $x_i$ and $y_i$ are $(s-r)+r=s$ and 
the degrees of the vertices $u_i$ and $w_i$ are $r.$ So $\Gamma _1$ is an 
$(r,s;4)$-babi-cage of order $4(s-r)$. 

In the second case $\frac{s}{2}>s-r$. Let $v=2s$. Since $s$ is even, $v$ is divisible by $4.$  Let the vertices of $\Gamma _2$ be $U\cup L$, where 
$$
U=\{ u_0,u_1,\dots ,u_{v/4-1}\} \cup \{ x_1,x_2,\dots ,x_{v/4}\}, $$ 
$$L= \{ w_0,w_1,\dots ,w_{v/4-1}\}  \cup \{ y_1,y_2,\dots ,y_{v/4}\} .
$$
Take the complete bipartite graph with bipartitions $U$ and $V$. Then delete 
the edges of type $u_iw_{i+j}$ for all $0\leq i\leq v/4-1 $ and all $0\leq j \leq s-r-1$. 
After the deletion, the degrees of the vertices $x_i$ and $y_i$ are $s$, and 
the degrees of the vertices $u_i$ and $w_i$ are $s-(s-r)=r.$ So $\Gamma _2$ is an 
$(r,s;4)$-babi-cage of order $2s$. 

In the third case, first we show that $n_{bb}\geq 2(s+1)$. Suppose to the contrary that $G$ is an $(r,s;4)$-babi-graph of order $2s$. Let $uv$ be a fat edge of $G$. Then the $s$ neighbours of $u$ and the $s$ neighbours of $v$ form a bipartition of the vertices of $G$, because $G$ does not contain any 3-cycle. Thus $G$ is a bipartite graph. Since $s$ is odd,
one of the two bipartitions contains more fat vertices than the other one. But this implies that the number of edges emanating from this bipartition is greater than the number of edges emanating from the other bipartition, a contradiction. Since the order of $G$ is an even number, it must be at least $2(s+1)$. Now we give a construction.  
If $s\leq 2r$ and $s$ is odd, then $s+1\leq 2r$ and $v=2(s+1)$ is divisible by $4$. Replace $s$ with $s+1$ and create the graph $\Gamma '_2$ in the same way as $\Gamma _2$ was created in the second case.
Then $\Gamma '_2$ is an $(r,s+1;4)$-babi-graph of order $2(s+1)$. Finally, delete the edges $x_iy_i$ of $\Gamma '_2$ for $1\leq i\leq (s+1)/2$. The graph $\Gamma _3$ obtained after deleting these edges is an $(r,s;4)$-babi-graph of order $2(s+1)$. 
\end{proof}

\section{Balanced biregular graphs of girth $5$}\label{girth5}

First, we consider semi-regular babi-graphs. Recall that a {\emph{semi-regular graph}} is a biregular graph with consecutive degrees. In the proof of Theorem \ref{equal-5-6} we have already constructed a 
$(2,3;5)$-babi-cage of order $8$.
Now, we consider the case $r>2$.

\begin{lemma}\label{semireg-5}
Let $r>2$ be an integer. Then 
$$n_{bb}(r,r+1;5)\geq
\begin{cases}
r^2+r+4, \quad\text{if $r\equiv 0,3 \pmod 4$,} \\
r^2+r+6, \quad\text{if $r\equiv 1,2 \pmod 4$.}
\end{cases}$$
\end{lemma}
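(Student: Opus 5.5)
The plan is to combine the general Moore-type bound of Theorem \ref{lowerboundbabigraph}, the equality analysis of Theorem \ref{equal-5-6}, and a divisibility condition on the order of an $(r,r+1;5)$-babi-graph.

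\textbf{Step 1 (Moore-type bound).} Substitute $s=r+1$ and $g=5$ into the odd-girth case of Theorem \ref{lowerboundbabigraph}. For $g=5$ the sum $1+(r-1)+\dots+(r-1)^{\frac{g-5}{2}}$ is just $1$, so any $(r,r+1;5)$-babi-graph $G$ of order $v$ satisfies
$$v\geq 1+(r+1)+\bigl(r(r+1)+1-(r+1)\bigr)=r^2+r+2.$$

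\textbf{Step 2 (excluding equality).} Suppose $v=r^2+r+2$. Then $n_{bb}(r,r+1;5)\le v$, and Step 1 gives $n_{bb}(r,r+1;5)\ge v$, so $n_{bb}(r,r+1;5)=rs+(s-r)^2+1$ with $s=r+1$. The first part of Theorem \ref{equal-5-6} then forces $r=2$, contradicting $r>2$. Hence $v\geq r^2+r+3$.

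\textbf{Step 3 (divisibility by $4$).} The vertex count splits into $v/2$ thin and $v/2$ fat vertices, so $v$ is even. The degree sum is
$$\frac v2\,r+\frac v2\,(r+1)=\frac v2\,(2r+1).$$
This must be even, and $2r+1$ is odd, so $v/2$ is even, i.e.\ $4\mid v$.

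\textbf{Step 4 (residues mod $4$).} Since $r^2+r=r(r+1)$ is even, $r^2+r+3$ is odd and so is excluded by Step 3. We have $r^2+r\equiv 0\pmod 4$ when $r\equiv 0,3\pmod 4$, and $r^2+r\equiv 2\pmod 4$ when $r\equiv 1,2\pmod 4$.
\begin{itemize}
\item If $r\equiv 0,3\pmod 4$, then $r^2+r+4$ is the least multiple of $4$ that is at least $r^2+r+3$.
\item If $r\equiv 1,2\pmod 4$, then $r^2+r+4\equiv 2\pmod 4$ is excluded, and the next candidate is $r^2+r+6$.
\end{itemize}
This gives the two stated bounds.

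\textbf{Main obstacle.} The only non-routine step is Step 2, excluding the Moore-type bound itself. That step rests entirely on Theorem \ref{equal-5-6}; everything else is parity bookkeeping.
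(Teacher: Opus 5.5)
Your proof is correct and follows essentially the same route as the paper's: the Moore-type bound of Theorem \ref{lowerboundbabigraph} with $s=r+1$, strictness from Theorem \ref{equal-5-6}, divisibility of the order by $4$, and the residue of $r^2+r$ modulo $4$. The only cosmetic difference is that you derive $4\mid v$ from the evenness of the degree sum $\frac{v}{2}(2r+1)$, whereas the paper uses the parity of the number of odd-degree vertices; both are the handshake lemma.
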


\begin{proof}
By Theorems \ref{lowerboundbabigraph} and and \ref{equal-5-6}, we have 
$$n_{bb}(r,r+1;5) > r^2+r+2$$
for $r>2$. In a simple graph, the number of vertices of odd degree is an even number; hence, the order of an 
$(r,r+1;5)$-babi-graph must be divisible by $4.$ The smallest numbers greater than $r^2+r+2$ and divisible by $4$ are $r^2+r+4$ and $r^2+r+6$, respectively. This proves the statement.
\end{proof}

\subsection{Constructions of semi-regular babi-cages with girth $5$}

In this subsection, we present some babi-cages of girth $5.$ The construction of the $(2,3;5)$-babi-cage on 8 vertices in the proof of Theorem \ref{equal-5-6} was a simple deletion of some vertices and edges from the $(3,5)$-cage. This method also works for some other values of $r$. 

\begin{theorem}
There exists a
\begin{enumerate}
\item
$(3,4;5)$-babi-cage on 16 vertices,
\item 
$(5,6;5)$-babi-cage on 36 vertices,
\item 
$(6,7;5)$-babi-cage on 48 vertices.
\end{enumerate}
\end{theorem}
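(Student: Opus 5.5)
The plan is to match the lower bound of Lemma \ref{semireg-5} with a construction in each case, reusing the deletion idea of Theorem \ref{equal-5-6}. By Lemma \ref{semireg-5}, $n_{bb}(3,4;5)\ge 16$ (since $3\equiv 3 \pmod 4$), $n_{bb}(5,6;5)\ge 36$ (since $5\equiv 1$) and $n_{bb}(6,7;5)\ge 48$ (since $6\equiv 2$). It therefore suffices to exhibit babi-graphs of exactly these orders. I would obtain each one from a known $(r+1,5)$-cage: the Robertson graph ($19$ vertices, $4$-regular), the $40$-vertex $(6,5)$-cage, and the Hoffman--Singleton graph ($50$ vertices, $7$-regular).

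The mechanism is as follows. In a $(r+1,5)$-graph, delete a vertex set $S$ spanning $e(S)$ edges. Every remaining vertex with exactly one neighbour in $S$ becomes thin, of degree $r$. All other remaining vertices stay fat, provided no remaining vertex has two neighbours in $S$. The number of thin vertices is $(r+1)|S|-2e(S)$, and we need this to equal half of $n-|S|$. Deleting vertices or edges cannot create short cycles. Many $5$-cycles of the cage avoid $S$, so the girth stays exactly $5$.

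\emph{Robertson graph.} Take $S$ to be a path $x\,y\,z$. Then $16$ vertices remain and $12-4=8$ edges leave $S$. I must check that these $8$ outside neighbours are distinct:
\begin{itemize}
\item a coincidence between the outside neighbours of $x$ and $y$ (or of $y$ and $z$) gives a triangle;
\item a coincidence between the outside neighbours of $x$ and $z$ gives a $4$-cycle.
\end{itemize}
So we get $8$ thin and $8$ fat vertices.

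\emph{$(6,5)$-cage on $40$ vertices.} Take $S$ to be a star $K_{1,3}$ with centre $c$ and leaves $a,b,d$. Then $36$ vertices remain, and $24-6=18$ edges leave $S$. Distinctness of the outside neighbours goes the same way:
\begin{itemize}
\item two leaves sharing an outside neighbour gives a $4$-cycle through $c$;
\item $c$ sharing an outside neighbour with a leaf gives a triangle.
\end{itemize}
A path of length $3$ would not be safe here, because its two end vertices may share a neighbour via a $5$-cycle. That is why I use a star.

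\emph{Hoffman--Singleton graph.} Vertex deletion alone cannot work here: deleting $2$ vertices leaves $48$, but produces at most $12$ thin vertices. So I would delete an edge's endpoints $x,y$, which creates $12$ thin vertices, all distinct since there are no triangles or $4$-cycles. I would then delete a matching of $6$ edges among the remaining $36$ fat vertices, so that those $12$ endpoints also drop to degree $6$. This gives $24$ thin and $24$ fat vertices.

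The main obstacle is the Hoffman--Singleton case, where one must verify that the required matching exists. Specifically, the $36$ vertices at distance $2$ from the edge $xy$ must span at least $6$ pairwise disjoint edges, and after the deletions some $5$-cycle must survive. I would first try an explicit choice using the standard pentagon--pentagram description of the graph. Alternatively, I would argue that these $36$ vertices span many edges: they have degree $7$, at most $2$ of their neighbours lie among the $12$ thin vertices, and $x,y$ are not adjacent to them. A greedy argument then finds $6$ disjoint edges easily. A surviving $5$-cycle is guaranteed because only $2$ vertices and $6$ edges are removed from a graph with $1260$ pentagons.
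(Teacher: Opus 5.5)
Your proposal is correct. Its overall plan is the paper's: Lemma~\ref{semireg-5} gives the lower bounds $16,36,48$, and each bound is attained by deleting a small configuration from the corresponding $(r+1,5)$-cage. Your part~1 (deleting a path $xyz$ from the Robertson graph) is exactly the paper's argument.

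The differences are in parts 2 and 3.

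\emph{Part 2.} For the $40$-vertex $(6,5)$-cage, the paper deletes a path $u_1u_2u_3u_4$ whose endpoints are at distance $3$. This uses the fact that this cage has diameter $3$, and it is how the paper sidesteps the danger you identify for arbitrary $3$-paths. Your star $K_{1,3}$ needs only girth $5$, so it works in any $(6,5)$-graph and requires no further knowledge of the cage.

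\emph{Part 3.} For the Hoffman--Singleton graph, the paper fixes a $1$-factor $\mathcal F$ and deletes the two ends of one of its edges. Because there are no triangles or $4$-cycles, the $\mathcal F$-partner of each of the $12$ new degree-$6$ vertices has degree $7$. Hence $12$ edges of $\mathcal F$ lie among the $36$ degree-$7$ vertices, and the paper removes $6$ of them. This relies on the existence of a perfect matching. Your greedy argument replaces that. A vertex outside $N[x]\cup N[y]$ with two neighbours in $N(x)$ would close a $4$-cycle through $x$; you should state this explicitly. So each of the $36$ vertices has at least $5$ neighbours among them. The induced graph therefore has at least $90$ edges, and each greedy pick destroys at most $13$, so $6$ disjoint edges are easily found.

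\emph{Girth exactly $5$.} You assert that the girth stays exactly $5$; the paper also leaves this unchecked in parts 1 and 2. It is easily made rigorous by a Moore-tree count. In part 1, a fat edge exists by Proposition~\ref{fatedges_Gyuri}, and a girth-$\ge 6$ tree grown from it would have $20>16$ vertices. In parts 2 and 3, such a tree grown from any edge would have $42>36$ and $62>48$ vertices, respectively.
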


\begin{proof}
1. By Lemma \ref{semireg-5}, we have $n_{bb}(3,4;5)\geq 16.$
Let $G$ be the Robertson graph, the $(4,5)$-cage on 19 vertices. Take a 3-path $u_1u_2u_3$. Delete the vertices $u_i$ and the edges emanating from $u_i$ for $i=1,2,3.$ The neighbours of
the $u_i$s are pairwise different vertices, because the girth of $G$ is 5. 
The obtained graph $\Gamma $ has 16 vertices, 8 of them (the
remaining neighbours of the $u_i$s) have degree 3, and 8 of them have degree 4.
So $\Gamma $ is a $(3,4;5)$-babi graph. Its order is minimal, so it is a babi-cage.

2. By Lemma \ref{semireg-5}, we have $n_{bb}(5,6;5)\geq 36.$
Let $G$ be the $(6,5)$-cage on 40 vertices. The diameter of $G$ is 3. Let $u_1$ and $u_4$ be two vertices whose distance is 3. Take a 4-path $u_1u_2u_3u_4$. The neighbours of
the $u_i$s are pairwise different vertices, because the girth of $G$ is 5 and $u_1$ and $u_4$ have no
common neighbours. Delete the vertices $u_i$ and the edges emanating from $u_i$ for $i=1,2,3,4.$ The obtained graph $\Gamma $ has 36 vertices, 18 of them (the
remaining neighbours of the $u_i$s) have degree 5, and 18 of them have degree 6.
So $\Gamma $ is a $(5,6;5)$-babi graph. Its order is minimal, so it is a babi-cage.

3.  By Lemma \ref{semireg-5}, we have $n_{bb}(6,7;5)\geq 48.$
Let $G$ be the Hoffman-Singleton graph, the $(7,5)$-cage on 50 vertices. Take a 1-factor
$\mathcal{F}$ of $G.$ Let $u_1$ and $u_2$ be the two vertices of an edge of $\mathcal{F}$. Delete them and the edges emanating from $u_1$ and $u_2$. 
Then, after this deletion, out of the remaining 24 edges of $\mathcal{F}$, 12 will have one vertex of degree 5 and one of degree 6, and 12 will have both vertices of degree 6. In the second step, we delete 6 of the latter edges, so we decrease the degree of 12 vertices by 1. The obtained graph $\Gamma $ has 48 vertices, 24 of them (the vertices without deleted edges) have degree 7, and 24 of them have degree 6. So $\Gamma $ is a $(6,7;5)$-babi graph. Its order is minimal, so it is a babi-cage.
 \end{proof}

With simple deletion from a $(5,5)$-cage of order $30$, we can only create a $(4,5;5)$-babi-graph of order $28$, not of order $24.$ The following construction of a $(4,5;5)$-babi-cage of order $24$ is much more sophisticated.

\begin{theorem}
There exists a $(4,5;5)$-babi-cage on 24 vertices.
\end{theorem}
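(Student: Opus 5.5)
By Lemma \ref{semireg-5} with $r=4\equiv 0 \pmod 4$, we already have $n_{bb}(4,5;5)\geq 4^2+4+4=24$. So everything reduces to exhibiting a $(4,5;5)$-babi-graph on $24$ vertices: $12$ fat vertices of degree $5$ and $12$ thin vertices of degree $4$, hence $(60+48)/2=54$ edges.

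First I would rule out the naive route the paper hints at, to see why a direct construction is needed. Deleting a set $S$ of vertices from the $(5,5)$-cage of order $30$ requires $|S|=6$. The remaining neighbours of $S$ must be exactly $12$ vertices, each losing one edge, so $5\cdot 6-2e(S)=12$, i.e.\ $e(S)=9$. But a graph of girth at least $5$ on $6$ vertices has at most $6$ edges. So this deletion is impossible, and I would build the graph from scratch with symmetry.

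The plan is a voltage-graph (regular cyclic lift) construction over $\mathbb{Z}_6$, with four vertex orbits: two fat orbits $U,U'$ and two thin orbits $W,W'$, each a copy of $\mathbb{Z}_6$.
\begin{itemize}
\item I would choose a small base multigraph on $\{U,U',W,W'\}$ with voltages in $\mathbb{Z}_6$, so that in the lift every $U$- and $U'$-vertex has degree $5$ and every $W$- and $W'$-vertex has degree $4$.
\item A natural degree pattern: each thin vertex has $3$ fat neighbours and $1$ thin neighbour, giving $6$ thin edges. By equation (\ref{fatedgenumber}), $f=\frac{24}{4}\cdot 1+6=12$, so each fat vertex has $2$ fat and $3$ thin neighbours.
\item On the fat side, I would take a perfect matching $U_i U'_{i}$ plus a fat $6$-cycle-type structure using voltages, and arrange the remaining edges similarly.
\item Girth $5$ then becomes a finite list of conditions: no closed walk of length $3$ or $4$ in the base graph, without immediate backtracking, has net voltage $0$. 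I would check these conditions systematically, orbit pair by orbit pair.
\end{itemize}
Finally I would exhibit a $5$-cycle to confirm that the girth is exactly $5$. Since the order equals the lower bound of Lemma \ref{semireg-5}, the graph is a babi-cage.

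The main obstacle is choosing voltages that kill all $4$-cycles. A single circulant $\mathbb{Z}_{12}$ with one fat and one thin orbit fails for counting reasons. The $4$ mixed-edge shifts per vertex would need pairwise distinct differences, i.e.\ $12$ distinct nonzero differences, but $\mathbb{Z}_{12}$ has only $11$ nonzero elements. This is why I would split each type into two orbits over $\mathbb{Z}_6$, which spreads the difference constraints across several orbit pairs. If $\mathbb{Z}_6$ still fails, I would try $\mathbb{Z}_4$ with six orbits (three fat, three thin), or simply present an explicit adjacency list found by computer search and verify girth and degrees directly.
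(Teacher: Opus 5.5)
Your lower bound is correct and is exactly the paper's: Lemma~\ref{semireg-5} with $r=4\equiv 0 \pmod 4$ gives $n_{bb}(4,5;5)\geq 24$. The genuine gap is the other half, which is the whole content of the theorem: you never produce a $(4,5;5)$-babi-graph on $24$ vertices. What you give is a search strategy, namely a $\mathbb{Z}_6$-lift on four orbits with fallbacks to $\mathbb{Z}_4$ or a computer search. There is no base graph and no voltage assignment, so the closed-walk conditions are never checked. You also have not shown that your proposed degree pattern (one thin neighbour per thin vertex, two fat neighbours per fat vertex) is realizable with girth $5$ on $24$ vertices. You have only shown that it is arithmetically consistent with Equation~(\ref{fatedgenumber}). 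Until concrete voltages or an adjacency list are written down and checked, nothing has been proved. As a side remark, your argument against a single bicirculant over $\mathbb{Z}_{12}$ counts four mixed shifts per vertex. That is the pattern with no thin edges, not the pattern you had just fixed, with three fat neighbours per thin vertex. With three shifts the difference count ($6\leq 11$) gives no contradiction, so that step of your motivation does not go through as written.

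The paper closes this gap by \emph{adding} vertices to a smaller cage rather than deleting from a larger one. Start from the Robertson graph $R$, the $(4,5)$-cage on $19$ vertices, in the House of Graphs labelling. Add a new $5$-cycle $(a,b,c,d,e)$ and $11$ edges joining $a$ to $11,12,13$, $b$ to $2,6$, $c$ to $3,15$, $d$ to $8,17$ and $e$ to $1,7$. Then $a$ and the eleven touched vertices of $R$ have degree $5$, while $b,c,d,e$ and the remaining eight vertices of $R$ have degree $4$, giving $12+12$ vertices. No $3$- or $4$-cycle appears, for three reasons:
\begin{itemize}
\item each attachment set is a clique in the distance-$3$ graph of $R$, so a cycle through a single new vertex has length at least $5$;
\item the five attachment sets are pairwise disjoint;
\item the attachment vertices of two consecutive vertices of the new $5$-cycle are non-adjacent in $R$.
\end{itemize}
An explicit graph of this kind, with such a check, is what your proof is missing. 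Your voltage approach might supply one, but only once it is actually carried out.
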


\begin{proof}
By Lemma \ref{semireg-5}, we have $n_{bb}(4,5;5)\geq 24.$
Let $R$ be the Robertson graph, the $(4,5)$-cage on 19 vertices. Denote its vertices by $1,2,\dots ,19.$
Then the adjacency table of $R$ is the following (see: House of Graphs \cite{HoG}, no. 1288-1):

\medskip
\begin{tabular}{|l|l|l|l|}
\hline
1: 2 3 4 5
&
2: 1 8 9 10
&
3: 1 12 14 16
&
4: 1 11 18 19
\\
5: 1 13 15 17
&
6: 7 14 17 19
&
7: 6 15 16 18
&
8: 2 13 14 18
\\
9: 2 11 16 17
&
10: 2 12 15 19
&
11: 4 9 14 15
&
12: 3 10 17 18
\\
13: 5 8 16 19
&
14: 3 6 8 11
&
15: 5 7 10 11
&
16: 3 7 9 13
\\
17: 5 6 9 12
&
18: 4 7 8 12
&
19: 4 6 10 13
&
\\
\hline
\end{tabular}

\medskip
The distance-3 graph of $R$ is a union of three cycles: $(11,12,13), \, (1,6,2,7)$ and
$(3,15,8,17,4,16,10,14,5,18,9,19).$ Let $C=(a,b,c,d,e)$ be another five-cycle. 

Now, we define a new graph, $G$.
Let the vertices of $G$ be the vertices of $R\cup C,$ the edges be the edges $38$ of $R$, the
$5$ edges of $C$ and $11$ new edges: 
$a11, a12, a13, b2, b6, c3, c15, d8, d17, e1, e7.$
Then the valency 5 vertices of $G$ are $1,2,3,6,7,8,11,12,13,15,17,a.$ The girth of $G$ is 5, because
The girth of $R$ is $4$, the new neighbours 
of a vertex of $C$ end adjacent vertices of the distance-3 graph of $R,$ and if two 
vertices are adjacent in $C$, then their new edges end in non-adjacent vertices of $R$. Hence $G$ does not contain any triangle or $4$-cycle. Since $G$ contains $C$, its girth is $5$, so $G$ is a
$(4,5;5)$-babi-cage on 24 vertices.  
\end{proof}

In the following theorem, we present a $(4,5;5)$-babi-graph on $28$ vertices.
Although this graph is not a cage, its order is close to that of a cage, and its simple geometric construction is worth showing.

\begin{theorem}
There exists a $(4,5;5)$-babi graph on 28 vertices.
\end{theorem}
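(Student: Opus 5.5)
We build the graph from the Levi graph of the biaffine plane of type 1 of order $4$. We delete one parallel class of points, and then add a few \emph{fat} edges between points of the same class and between lines of the same class. The girth has to be checked by coordinates.

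\textbf{Setting up.} Take $\mathrm{PG}(2,4)$ and an incident pair $(P,\ell)$, where $P$ is the point at infinity in the vertical direction and $\ell$ is the line at infinity. The resulting biaffine plane has the $16$ points $(x,y)$, $x,y\in \mathrm{GF}(4)$, and the $16$ non-vertical lines $[m,b]\colon y=mx+b$. Its Levi graph is $4$-regular of girth $6$.

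The points split into four classes $\{x=a\}$, and the lines into four parallel classes $\{m=c\}$. Two distinct points in one class are not collinear, and two distinct lines in one class are disjoint. Hence such pairs are at distance $4$ in the Levi graph.

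Delete the four points with $x=0$. Every line $[m,b]$ contains exactly one of them, so we get $12$ points of degree $4$ and $16$ lines of degree $3$, on $28$ vertices in total.

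\textbf{Adding edges.} Within each point class $x=a$, $a\neq 0$, add the perfect matching $(a,y)\sim(a,y+1)$. All $12$ points then have degree $5$.

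In three of the line classes, add the matching $[m,b]\sim[m,b+\omega]$, where $\mathrm{GF}(4)=\{0,1,\omega,\omega^2\}$. Those $12$ lines then have degree $4$.

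In the fourth line class, add the path
$$[m,0]\,[m,\omega]\,[m,1]\,[m,\omega^2].$$
The two inner lines get degree $5$ and the two end lines get degree $4$.

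So there are $12+2=14$ vertices of degree $5$ and $14$ of degree $4$, and the graph is balanced.

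\textbf{Girth.} The main obstacle is to show that the girth is exactly $5$. Every new edge joins two vertices of the same type, while the old edges join a point to a line. I would rule out short cycles in the following order.

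\begin{itemize}
\item A triangle would need either two incidence edges and one new edge, which means two points of one class on a common line or two lines of one class through a common point, which is impossible; or three new edges, but the added edges inside one class form matchings or a path, so they contain no triangle.
\item A $4$-cycle using exactly one new edge would need its endpoints at distance $3$, but their distance is $4$.
\item A $4$-cycle using two new edges of the same type, joined by incidences, again needs two same-class vertices at distance $2$. Two consecutive new edges would need a triangle or $4$-cycle among the added edges in a class, which does not exist.
\item The only remaining candidate is $P_1P_2$ (new), $P_2\in l_1$, $l_1l_2$ (new), $l_2\ni P_1$. Take $P_i=(a,y_i)$ and $l_i=[m,b_i]$. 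Then $b_1=y_2-ma$ and $b_2=y_1-ma$, so $b_1-b_2=y_2-y_1$.
\end{itemize}

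All point-edge differences equal $1$. All line-edge differences lie in $\{\omega,\omega^2\}$: the matchings use $\omega$, and the path uses $\omega$, $\omega+1=\omega^2$, $\omega^2+1=\omega$. So this last $4$-cycle cannot occur, and the girth is at least $5$.

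A $5$-cycle exists, for example one using a single point edge together with the points' incidences. Concretely, $(a,y)$ and $(a,y+1)$ have a path of length $4$ through two lines and a third point, which closes up with the new edge. Hence the girth is exactly $5$, and the graph is a $(4,5;5)$-babi-graph on $28$ vertices.
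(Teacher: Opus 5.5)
Your construction is correct, and it takes a genuinely different route from the paper.

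\textbf{The paper's route.} The paper uses the partition of $\mathrm{PG}(2,4)$ into three Baer subplanes. Deleting one Fano subplane leaves a $4$-regular bipartite graph of girth $6$ on $28$ vertices. Through each deleted point pass exactly two surviving lines (the tangents $e_i,f_i$), so joining these pairs gives a perfect matching on the $14$ lines. All new edges lie on one side, so the girth check is purely synthetic. Matched lines meet only in a deleted point, and a $4$-cycle through a new edge would need an edge between two points.

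\textbf{Your route.} You work in coordinates in the type $1$ biaffine plane, break regularity by deleting the vertical class $x=0$, and add edges on \emph{both} sides. This is essentially the amalgamation technique the paper uses for biaffine planes in the girth-$5$ section, where it amalgamates on the point side only, here applied to points and lines simultaneously. The price is that the mixed $4$-cycle $P_1P_2\ell_1\ell_2$ is no longer excluded by incidence geometry. You need the computation $b_1+b_2=y_1+y_2$ together with the disjointness of the difference sets $\{1\}$ and $\{\omega,\omega^2\}$. You carried this out correctly, including the differences $\omega,\omega^2,\omega$ along the path.

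\textbf{What each approach buys.} Your argument is fully explicit and does not depend on the Baer-subplane partition, and the difference-set mechanism gives freedom to tune degrees. The paper's graph is more structured: its fat vertices are exactly the $14$ lines, and it has $7$ fat edges and no thin edge, so it attains $f=\frac{v}{4}(s-r)$. Yours has $13$ fat and $6$ thin edges. The paper's girth proof is also shorter.

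\textbf{Two small points to tidy.}
\begin{enumerate}
\item In the $5$-cycle you must make sure the intermediate point survives the deletion of $x=0$. Its $x$-coordinate is $a+(m_1+m_2)^{-1}$, so you need $m_1+m_2\notin\{0,a^{-1}\}$. An explicit example is $(1,0),[0,0],(\omega,0),[\omega,\omega^2],(1,1)$.
\item The short-cycle case analysis is cleanest if you first observe that incidence edges are the only edges changing vertex type, so every cycle uses an even number of them. That disposes of the configurations your bullets leave implicit. It also repairs the second sentence of your third bullet: two consecutive new line edges in the path class close up through a point, not among the added edges, and that case is excluded because the two end lines are parallel.
\end{enumerate}
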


\begin{proof}
Consider $\Pi _4=\mathrm{PG}(2,4)$. It can be partitioned into three disjoint Baer
subplanes (see \cite{bruck}), and each of these planes is isomorphic to the Fano plane,
$\mathrm{PG}(2,2)$. Let $\Pi_2$ denote one of these Fano planes. Let 
$\mathcal{P}=\{ P_i \colon i=1,2,\dots ,7\}$ be the set of points of $\Pi_2$. 
For any $P_i\in \mathcal{P}$, there are five lines of $\Pi_4$ through $P_i$; three
of them are lines of $\Pi_2$, the other two lines, say $e_i$ and $f_i$, are
tangents to $\Pi_2$. Thus, the seven points of $\Pi_2$ define a matching on the 14 
lines of $\Pi_4\setminus \Pi_2$.

Delete the points and lines of $\Pi_2$ from $\Pi_4$ and let $G$ be the Levi graph of the remaining point-line incidence geometry. Then $\Gamma $ is a bipartite $(4,6)$-graph on $2\times 14=28$ vertices. For $i=1,2,\dots,7$, add a new edge between the
vertices corresponding to the lines $e_i$ and $f_i$. Let $\Gamma $ denote the graph $G$
extended by the new edges. By definition, $\Gamma $ has 14 vertices of degree 4
(the points of $\Pi_4\setminus \Pi_2$) and 14 vertices of degree 5 (the lines of $\Pi_4\setminus \Pi_2$).

We claim that $\Gamma $ has girth $5.$ 
Suppose that $\Gamma $ contains a cycle $\mathcal{C}$ of length $3$ or $4$. Then 
$\mathcal{C}$ must contain a new edge. A new edge joins two lines of 
$\Pi_4\setminus \Pi_2$. These pairs of vertices have no common neighbours in 
$\Gamma,$ because their point of intersection is a point of $\Pi_2,$ so it was
deleted. Hence $\Gamma $ is triangle-free. If 
$\mathcal{C}$ is a four-cycle $(u_1,u_2,w_2,w_1)$ where $u_1u_2$ is a new edge, 
then $u_1$ and $u_2$ are lines. So $w_2$ and $w_1$ must be points. Since there is no edge between any two points, the girth of $\Gamma $ is at least $5.$ 
There are 5-cycles in $\Gamma $
(for example $(\ell_1,\ell_2,P_2,\ell _3,P_1)$, where $\ell _1\ell_2$ is a new 
edge, $\ell _3$ is a third tangent to $\Pi_2$, and $P_i$ is the point of intersection of $\ell_i$ and $\ell_3$
for $i=1,2$), so its girth is 5.
\end{proof}

\subsection{Constructions of small babi-graphs with girth $5$ using biaffine planes}

In this subsection, we employ a technique called \emph{amalgamation}, which was introduced by Funk in 2009  \cite{F2009} and Jorgensen \cite{jorg}. Recall that the amalgamation of a graph $\Gamma $ in a set $S$ consists of taking the elements of $S$ as the vertices of $\Gamma $ and add the edges of 
$\Gamma $ are drawn between the elements of $S$.

This method has been used in a lot of papers to construct regular and biregular graphs of girth $5$ (see, for instance, \cite{AABL11, AABB17, ABBM19, AB21}), as well as, more recently, mixed graphs of girth $5$ (see, for instance, \cite{ACG22, AM26, JJKP25}). 
Start with the incidence graph of a biaffine plane, choose specific sets of points or lines, and amalgamate graphs of the order of the cardinality of the chosen set.  In those papers, the authors are interested in amalgamating graphs that preserve some conditions related to the weights of the edges or with the direction of the arcs. In our case, it is sufficient to amalgamate regular graphs of girth at least $5$.

\begin{theorem}
Suppose that there exists a finite projective plane $\Pi _r$ of order $r$ and a $k$-regular, not necessarily connected graph $\Gamma $ with girth at least 5 and of order $v=r-1$ or $v=r$. Then there exists an 
$(r,r+k;5)$-babi-graph of order $v=2r^2-2$ or $v=2r^2$, respectively. This graph has $\frac{v}{4}k$ fat edges and no thin edges.  
\end{theorem}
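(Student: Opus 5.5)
Start from the biaffine plane obtained from $\Pi_r$. For $v=r-1$ use a type~1 biaffine plane: delete an incident point--line pair $(P,\ell)$, the $r+1$ lines through $P$ and the $r+1$ points of $\ell$. This leaves $r^2-1$ points and $r^2-1$ lines. Each remaining point lies on $r+1$ lines, one of which passes through $P$, so it lies on $r$ remaining lines; dually each remaining line contains $r$ remaining points. For $v=r$ use a type~2 biaffine plane: delete a non-incident pair. This leaves $r^2$ points and $r^2$ lines, each of degree $r$, because every point off $\ell$ lies on exactly one line through $P$. In both cases the Levi graph $B$ is $r$-regular, bipartite, of girth $6$, and of order $2(r^2-1)$ or $2r^2$.

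The key structural fact I will use is a partition of the lines of $B$ into parallel classes. Two remaining lines meet in a deleted point exactly when their intersection lies on $\ell$, or is $P$ itself in the type~1 case. In type~2 the lines through a fixed point $Q\in\ell\setminus\{\dots\}$, with the line $QP$ removed, form a class of $r$ lines, and the $r+1$ points of $\ell$ give $r+1$ classes of size $r$, which accounts for all $r(r+1)=r^2+r$... Here I need to recheck the count: there are $r^2+r+1-1-(r+1)=r^2$ remaining lines, and each point of $\ell$ carries $r+1$ lines minus $\ell$ minus $QP$, that is $r-1$ lines. So I will instead choose the classes on the \emph{point side}: the points on a deleted line through $P$ (other than $P$ and the point on $\ell$) form a class of $r$ points in type~2, and there are $r$ such classes... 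Again a miscount, so the right choice is to be fixed by counting carefully. The target is $r$ classes of size $v$ in type~2, and $r+1$ classes of size $r-1$ (lines through a point of $\ell\setminus\{P\}$ minus $\ell$) in type~1, i.e.\ $r^2-1$ lines in total, with no two lines of a class sharing a remaining point. For type~1 this works cleanly: the $r$ points of $\ell\setminus\{P\}$ each carry $r$ lines other than $\ell$, so there are $r$ classes of size $r$, matching $r^2$... I will settle the exact class sizes by this counting, and pair type~1 with $v=r-1$ or $r$ according to which is consistent. Either way the essential property is that the classes partition one side, have size $v$, and two vertices in the same class are at distance at least $4$ in $B$, since they have no common neighbour.

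Amalgamation then proceeds as follows. On each class place a copy of $\Gamma$ (a $k$-regular graph on $v$ vertices of girth $\ge5$) via a bijection, and add its edges. The vertices of the classes now have degree $r+k$ (fat), while the other side keeps degree $r$ (thin). This gives equally many fat and thin vertices, the order $2r^2-2$ or $2r^2$, no thin edges, and $kv'/2$ fat edges where $v'$ is half the order, i.e.\ $\frac{v}{4}k$ in the statement's notation. This is consistent with equation~(\ref{fatedgenumber}) for $t=0$, which serves as a check.

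The main obstacle is the girth. Any cycle using only $B$-edges has length $\ge6$, and any cycle inside one copy of $\Gamma$ has length $\ge5$. For mixed cycles of length $3$ or $4$ I argue as follows. A triangle would need two same-class vertices with a common neighbour, which is impossible. A $4$-cycle either has two new edges, possibly from different classes, joined by $B$-paths of length $\le1$; but the new edges join vertices of the same side, and $B$-edges alternate sides, so two fat vertices can only be joined by a $B$-path of even length $\ge2$, which makes the length too long. Or it has one new edge $xy$ plus a $B$-path of length $3$ between $x$ and $y$, which is impossible since same-side vertices are at even distance. Or it has a new edge $xy$ and a path $y\,z\,w\,x$ with $yz$ a new edge; then $x,y,z$ lie in one class and $wx$ must be a new edge too, so the cycle lies in $\Gamma$. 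Finally, $5$-cycles exist inside $\Gamma$, or can be built from two new edges and a $B$-path of length $2$ across classes, so the girth is exactly $5$.
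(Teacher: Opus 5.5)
Your route is the paper's: amalgamate copies of $\Gamma$ on the point sets of the deleted lines of a biaffine plane, then exclude short cycles using bipartiteness of the Levi graph $B$ and the fact that two points of one class have no common neighbour. However, the structural step the construction rests on is left unresolved, and what you write about it is partly wrong. In the incident case, $\ell$ is one of the $r+1$ lines through $P$ and $P$ is one of the $r+1$ points of $\ell$. So the type 1 plane keeps $r^2$ points and $r^2$ lines, not $r^2-1$. In the non-incident case $r+2$ points and $r+2$ lines are removed, leaving $r^2-1$ of each; your $r^2+r+1-1-(r+1)=r^2$ is an arithmetic slip. Hence your pairing of type 1 with $|V(\Gamma)|=r-1$ is reversed. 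You do reach the correct type 1 count in passing, but you explicitly postpone deciding the class sizes. What is needed is the following. In type 1, the $r$ lines through $P$ other than $\ell$ each contain exactly $r$ remaining points, giving $r$ classes of size $r$; use $|V(\Gamma)|=r$, order $2r^2$. In type 2, each of the $r+1$ lines through $P$ loses $P$ and its point on $\ell$, so it contains $r-1$ remaining points, giving $r+1$ classes of size $r-1$; use $|V(\Gamma)|=r-1$, order $2r^2-2$. Only classes of size exactly $|V(\Gamma)|$ covering one side let you place copies of $\Gamma$ and obtain the stated degrees and fat-edge count.

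Your proof that the girth is exactly $5$ also fails. $\Gamma$ is only assumed to have girth at least $5$, and in some intended applications (a perfect matching, or a cycle of length greater than $5$) it contains no $5$-cycle. Your alternative, ``two new edges and a $B$-path of length $2$'', cannot occur. $B$-edges join a fat and a thin vertex while new edges join two fat vertices, so every cycle has an even number of $B$-edges, and a $5$-cycle therefore has an odd number of new edges. Use instead one new edge $xy$ and a $B$-path of length $4$. Take remaining lines $\ell_1\ni x$ and $\ell_2\ni y$ meeting $\ell$ in different points, which is possible since $r\ge 2$. Then $z=\ell_1\cap\ell_2$ is a remaining point distinct from $x$ and $y$, and $(x,\ell_1,z,\ell_2,y)$ is a $5$-cycle. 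Finally, in your third $4$-cycle case, the claim that $wx$ must be a new edge needs its reason: otherwise $w$ would be a thin common neighbour of $x$ and $z$, which lie in one class. This is precisely the case the paper's argument handles, and it is the one place where the class property, rather than parity, excludes a $4$-cycle.
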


\begin{proof}
Construct the type 1 biaffine plane $\mathcal{B}_1$ and the type 2 biaffine plane $\mathcal{B}_2$
from $\Pi _r$. For $i=1,2$ let $G_i$ be the Levi graph of $\mathcal{B}_i$
Then $G_i$ is an $(r,6)$-graph of order $v=2r^2$ if $i=1$ and of order $v=2r^2-2$ if $i=2$. 

Consider the deleted lines of $\Pi _r$ that contain some points of $\mathcal{B}_i$. There are $r$ 
or $r-1$ points of $\mathcal{B}_i$ on each of these lines, depending on whether $i=1$ or $i=2.$
Amalgamate the corresponding $\Gamma $ in each of the sets of points of the deleted lines.
Let $G_i'$ denote the graph obtained from $G_i$ by adding the new edges. By definition, $G_i'$ has
$v/2$ vertices of degree $r+1$ (the points of $\mathcal{B}_i$) and $v/2$ vertices of degree $r$ (the lines of $\mathcal{B}_i$).

We claim that $G_i'$ has girth $5.$ 
Suppose that $G_i'$ contains a cycle $\mathcal{C}$ of length $3$ or $4$. Then 
$\mathcal{C}$ must contain a new edge. A new edge joins two points of 
$\mathcal{B}_i$. These pairs of vertices have no common neighbour in 
$G_i$, because they are on a deleted line. Hence $G_i'$ is triangle-free. If 
$\mathcal{C}$ is a four-cycle $(u_1,u_2,w_2,w_1)$ where $u_1u_2$ is a new edge, 
then $u_1$ and $u_2$ are points. At least one of $w_2$ and $w_1$ must be line, because
$\Gamma $ does not contain a four-cycle. Without loss of generality, we may assume that $w_1$ is a line.
Since there is no edge between any two lines, this implies that $w_2$ is a point. There is an edge between
$u_1$ and $u_2,$ hence they are on the same deleted line. So do $u_2$ and $w_2$. Since we deleted
exactly one line through any remaining points, $u_1,u_2,w_2$ are on the same deleted line.
Thus $u_1$ and $w_2$ have no common neighbour in $G_i$, a contradiction. 

There are 5-cycles in $G_i'$ (for example $(P_1,\ell_1,P_3,\ell_2,P_2)$, where $P_1P_2$ is a new 
edge, $\ell _1$ and $\ell _2$ are non-parallel, non-deleted lines through $P_1$ and $P_2$, respectively,
and $P_3$ is their point of intersection), so its girth is 5.

Notice that $G_i'$ has no thin edge, because there is no edge between any pair of lines. The 
number of fat edges is $\frac{v}{4}k$, because, by definition, there are $k$ fat edges through any of the  $\frac{v}{2}$ points.
\end{proof}

In the simplest case, when $\Gamma $ is the union of independent edges, we get the following.

\begin{corollary}
Suppose that there exists a finite projective plane $\Pi _r$ of order $r$. Then there exists an 
$(r,r+1;5)$-babi-graph of order $v=2r^2$ if $r$ is even, and of order $v=2r^2-2$ if $r$ is odd. 
\end{corollary}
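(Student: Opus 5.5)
The corollary is the preceding theorem with $k=1$, where $\Gamma$ is a perfect matching, i.e.\ a disjoint union of independent edges. Such a graph is $1$-regular. It has no cycles at all, so its girth is infinite and in particular at least $5$, as the theorem requires (this is why the theorem allows "not necessarily connected" graphs). A $1$-regular graph on $v$ vertices exists exactly when $v$ is even.

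The only thing left is to pick the right value of $v$ according to the parity of $r$.

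If $r$ is even, we take $v=r$ and let $\Gamma$ be $r/2$ independent edges. The theorem then gives an $(r,r+1;5)$-babi-graph of order $2r^2$, built from the type 1 biaffine plane.

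If $r$ is odd, then $r-1$ is even. We take $v=r-1$ and let $\Gamma$ be $(r-1)/2$ independent edges. The theorem then gives an $(r,r+1;5)$-babi-graph of order $2r^2-2$, built from the type 2 biaffine plane.

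There is no real obstacle. The one point worth stating explicitly is that the theorem's girth hypothesis is satisfied vacuously by a forest. As a consistency check, since $r$ and $r+1$ are consecutive, the order must be divisible by $4$ (as in Lemma \ref{semireg-5}). Indeed, $2r^2$ is divisible by $4$ when $r$ is even, and $2r^2-2=2(r-1)(r+1)$ is divisible by $8$ when $r$ is odd.
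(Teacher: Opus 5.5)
Your proposal is correct and follows the paper's own argument: the corollary is the preceding theorem with $k=1$ and $\Gamma$ a union of independent edges. You also correctly spell out what the paper leaves implicit, namely that a perfect matching on $r$ or on $r-1$ vertices exists only for the matching parity of $r$, which selects the type 1 biaffine plane (order $2r^2$) for even $r$ and the type 2 biaffine plane (order $2r^2-2$) for odd $r$.
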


Note that if $r=2$ or $3$, this corollary gives a new construction of a $(2,3;5)$-babi-cage on 8
vertices or a $(3,4;5)$-babi-cage on 16 vertices. 

When $\Gamma $ is an $r$-cycle, we get the following.

\begin{corollary}
Suppose that there exists a finite projective plane $\Pi _r$ of order $r>5$. Then there exists an
$(r,r+2;5)$-babi-graph of order $2r^2-2$ by amalgamating cycles to the Levi graph of the type 2 
biaffine plane arising from $\Pi _r$.
\end{corollary}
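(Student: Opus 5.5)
This corollary is a direct application of the preceding theorem, with $\Gamma$ chosen to be the cycle $C_{r-1}$. We work with the type 2 biaffine plane $\mathcal{B}_2$ arising from $\Pi_r$. On each deleted line of $\Pi_r$ that meets $\mathcal{B}_2$ there are exactly $r-1$ points of $\mathcal{B}_2$, so in the theorem we use the case $v=r-1$, and the graph to be amalgamated on each such point set must have order $r-1$. Since the statement speaks of amalgamating cycles, we take $\Gamma$ to be a single cycle of length $r-1$. This graph is $2$-regular, so $k=2$.

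The hypotheses of the theorem must then be checked. First, $\Gamma$ must be $2$-regular with girth at least $5$. A cycle of length $r-1$ has girth $r-1$, so we need $r-1\geq 5$, that is $r\geq 6$, which is exactly the assumption $r>5$. Second, $\Pi_r$ is assumed to exist. With both hypotheses in place, the theorem yields an $(r,r+2;5)$-babi-graph of order $2r^2-2$. Its $r^2-1$ fat vertices are the points of $\mathcal{B}_2$, each of degree $r+2$, and its $r^2-1$ thin vertices are the lines of $\mathcal{B}_2$, each of degree $r$.

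The only delicate point is matching the cycle length to the correct point count. The wording ``an $r$-cycle'' in the text refers to $v=r$, which is the type 1 case and would give order $2r^2$. For the type 2 plane one must instead use $(r-1)$-cycles. If the intent really is the type 1 plane with $r$-cycles, the same argument works, now needing $r\geq 5$, and gives order $2r^2$. The stated order $2r^2-2$ and the bound $r>5$ confirm that the type 2, $(r-1)$-cycle reading is the intended one.

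All remaining work, namely the girth-$5$ verification and the existence of $5$-cycles, is already done in the proof of the theorem. No further step requires new argument.
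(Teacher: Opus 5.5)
Your proposal is correct and follows the paper's route: the corollary is the amalgamation theorem applied with $k=2$ to the type 2 biaffine plane, whose $r+1$ deleted lines through $P$ each carry $r-1$ remaining points. You are also right that the amalgamated cycles must be $(r-1)$-cycles, since the girth condition $r-1\geq 5$ is exactly $r>5$; the paper's lead-in phrase ``an $r$-cycle'' would instead fit the type 1 plane and order $2r^2$, not the stated corollary.
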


The order of these graphs are greater than the bounds arising from Theorems \ref{lowerboundbabigraph} and \ref{equal-5-6}:  
$$n_{bb}(r,r+2;5)\geq
\begin{cases}
r^2+2r+7, \quad\text{if $r$ is odd,} \\
r^2+2r+6, \quad\text{if $r$ is even.}
\end{cases}$$

For $r=2$ there exists a $(2,4;5)$-babi-cage of order $14$, see Figure \ref{(2,4;5)babicage}.
\begin{figure}[h]
\begin{center}
\includegraphics[scale=0.3]{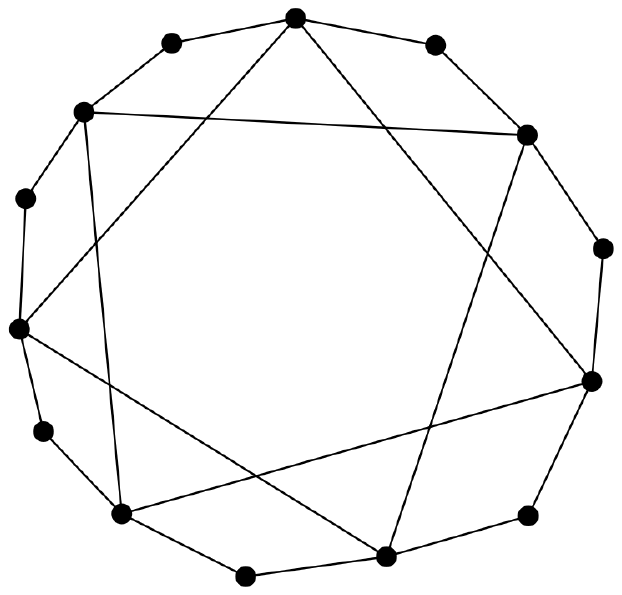}
\caption{A $(2,4;5)$-babi-cage of order $14$ (House of Graphs \cite{HoG}, no. 53705)}
\label{(2,4;5)babicage}
\end{center}
\end{figure}

For $r=3$ there exists a $(3,5;5)$-babi-graph of order $28.$ It can be constructed by 
simple deletion from one of the $(5,5)$-cages of order $30$, the Robertson-Wegner graph 
(for short, $RW$) \cite{111}. 

For the sake of completeness, we recall the geometric construction of $RW$.
Let $\mathcal{D}$ be a regular dodecahedron. The  vertices  of $\mathcal{D}$
determine five cubes. Each of these cubes determines two regular tetrahedra. The vertices
of $RW$ are the 20 vertices of $\mathcal{D}$, plus one vertex for each of the 10
tetrahedra.  The edges of $RW$ are the 30 edges of $\mathcal{D}$, plus each of the tetrahedral vertices is adjacent to its four determining vertices; finally, two tetrahedral vertices are adjacent if they are contained in the same cube.

Let $A_1B_1C_1D_1$ and $A_2B_2C_2D_2$ be two opposite faces of a cube inscribed into $\mathcal{D}$. Delete the two tetrahedral vertices
$A_1B_2C_1D_2$ and $A_2B_1C_2D_1$, and the edges emanating from them. The resulting graph, $RW'$, 
has 28 vertices, 20 of which have degree 5 and 8 vertices (the vertices of the cube) of degree 4. For $i=1,2$ the square $A_iB_iC_iD_i$ uniquely determines an edge $E_iF_i$ of $\mathcal{D}$ such that $A_iE_i,\, B_iE_i,\, C_iF_i,\, D_iF_i$ are edges of $\mathcal{D}$.
Furthermore, each of $A_2E_2B_2$ and $C_2F_2D_2$ determines a face of $\mathcal{D}$, $A_2E_2B_2G_2H_2$ and  $C_2F_2D_2K_2L_2$. In the second step, delete from $RW'$ the edges $A_1E_1,\, B_1E_1,\, C_1F_1,\, D_1F_1,\, E_1F_1$ and $A_2H_2,\, H_2G_2,\, G_2B_2,\, 
C_2L_2,\, L_2K_2,\, K_2D_2$. The second deletion reduces the degree of the vertices of the cube by one and the degrees of $E_1,\, F_1,\, G_2,\, H_2,$
$\, K_2, \, L_2$ by two. So the resulting graph is a $(3,5;5)$-babi-graph of order $28.$

\smallskip
We can construct relatively low-order babi-graphs using cages for the amalgamation graphs. We highlight two nice examples. We can amalgamate the Petersen graph into the type 2 biaffine plane arising from 
$\mathrm{PG}(2,11)$, and we can amalgamate the Robertson graph 
into the type 1 biaffine plane arising from $\mathrm{PG}(2,19)$.  We get the following.

 \begin{corollary}
$ $ \begin{itemize}
\item
There exists a $(11,14;5)$-babi-graph of order $240$.
\item
There exists a $(19,23;5)$-babi-graph of order $722$.
\end{itemize}
\end{corollary}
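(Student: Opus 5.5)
This corollary follows directly from the amalgamation theorem just proved. In that theorem, a projective plane of order $r$ together with a $k$-regular graph of girth at least $5$ on $r-1$ (type 2) or $r$ (type 1) vertices gives an $(r,r+k;5)$-babi-graph of order $2r^2-2$ or $2r^2$, respectively. So for each item I only need to name the plane and the amalgamated graph, check that the orders match, and read off the parameters.

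For the first item, take $\Pi_{11}=\mathrm{PG}(2,11)$, which exists since $11$ is prime. The amalgamated graph is the Petersen graph, which is $3$-regular of girth $5$ on $10=r-1$ vertices. Since the order is $r-1$, we use the type 2 biaffine plane. Each point of $\mathcal{B}_2$ lies on exactly one deleted line, and each such deleted line carries exactly $r-1=10$ points of $\mathcal{B}_2$. A copy of the Petersen graph is amalgamated on each of these point sets. The theorem then yields an $(11,11+3;5)=(11,14;5)$-babi-graph of order $2\cdot 11^2-2=240$.

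For the second item, take $\mathrm{PG}(2,19)$, which exists since $19$ is prime. The amalgamated graph is the Robertson graph, which is $4$-regular of girth $5$ on $19=r$ vertices. Since the order is $r$, we use the type 1 biaffine plane, where each remaining deleted line through the points carries exactly $r=19$ points. The theorem gives a $(19,19+4;5)=(19,23;5)$-babi-graph of order $2\cdot 19^2=722$.

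The proof has no real obstacle. The only point to check is that the count of points per deleted line matches the order of the amalgamated graph in each biaffine type, which was established in the proof of the theorem ($r$ points per line for type 1, $r-1$ for type 2). Beyond that, one only verifies the arithmetic $11^2\cdot 2-2=240$ and $2\cdot 361=722$.
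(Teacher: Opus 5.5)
Your proposal is correct and follows essentially the same route as the paper. Like the paper, you apply the amalgamation theorem with the Petersen graph ($k=3$, order $10=r-1$) in the type 2 biaffine plane from $\mathrm{PG}(2,11)$, and with the Robertson graph ($k=4$, order $19=r$) in the type 1 biaffine plane from $\mathrm{PG}(2,19)$; your checks of the points per deleted line and of the orders $240$ and $722$ are accurate.
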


Note that the bounds arising from Theorems \ref{lowerboundbabigraph} and \ref{equal-5-6}
are $n_{bb}(11,14;5)\geq 152$ and $n_{bb}(19,23;5)\geq 456$.

\section{Balanced biregular graphs of girth $6$}\label{girth6}

As in the previous section, we begin by examining semi-regular graphs.
By Theorems \ref{lowerboundbabigraph} and and \ref{equal-5-6} , we have 
$$n_{bb}(r,r+1;6) > 2(r^2+1).$$
We give a slight improvement of this general bound for semi-regular graphs.

\begin{proposition}
\label{estimatewith-c}
Let $c > 2$ be an integer and $\Gamma $ be an $(r,r+1;6)$-babi-graph
of order $v=2r^2+c.$ Then $\Gamma $ has less than $\frac{cv}{8}$ fat edges and
less than $\frac{(c-2)v}{8}$ thin edges.
\end{proposition}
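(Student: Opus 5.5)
The plan is to rerun the Moore-tree count of Theorem \ref{lowerboundbabigraph}, but around an \emph{arbitrary} fat edge, and then average as in Proposition \ref{fatedges-2_Gyuri}. Since $s-r=1$, Equation (\ref{fatedgenumber}) gives $f=\frac v4+t$. So the thin-edge bound is equivalent to the fat-edge bound: $f<\frac{cv}{8}$ holds if and only if $t<\frac{cv}{8}-\frac{v}{4}=\frac{(c-2)v}{8}$. It therefore suffices to prove $f<\frac{cv}{8}$.

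\textbf{Step 1: a local inequality for each fat edge.} For a vertex $x$ put $S(x)=\sum_{w\sim x}\deg(w)$. Since the girth is $6$, for every edge $xy$ the vertices of the Moore tree ${\cal T}_{xy}$ (depth $2$ on each side) are pairwise distinct. Hence
$$2+(\deg x-1)+(\deg y-1)+\sum_{w\sim x,\,w\neq y}(\deg w-1)+\sum_{w\sim y,\,w\ne x}(\deg w-1)=S(x)+S(y)-\deg x-\deg y+2\le v .$$
If $x$ is fat with $\deg_f(x)=a$, then $S(x)=a(r+1)+(r+1-a)r=r(r+1)+a$. So for a fat edge $xy$ with $\deg_f(x)=a$ and $\deg_f(y)=b$, the left side equals $2r^2+a+b$. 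With $v=2r^2+c$ this gives
$$\deg_f(x)+\deg_f(y)\le c \qquad\text{for every fat edge } xy .$$

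\textbf{Step 2: averaging.} Exactly as in (\ref{average}), the arithmetic–quadratic mean inequality over the $\frac v2$ fat vertices gives
$$cf\ \ge \sum_{xy\text{ fat}}(\deg_f(x)+\deg_f(y))=\sum_{x\text{ fat}}\deg_f(x)^2\ \ge\ \frac{(2f)^2}{v/2}=\frac{8f^2}{v},$$
hence $f\le\frac{cv}{8}$, and then $t\le\frac{(c-2)v}{8}$.

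\textbf{Step 3: strictness (the main obstacle).} Equality forces two things: every fat vertex has $\deg_f=\frac c2$, so $c$ is even, and for every fat edge the tree ${\cal T}_{xy}$ covers all $v$ vertices. I would then derive a contradiction by counting fat vertices inside ${\cal T}_{xy}$, as in the proof of Theorem \ref{equal-5-6}; that count must equal $\frac v2=r^2+\frac c2$.

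First, $D_1^2(x,y)\cup D_1^2(y,x)$ contains $2(\frac c2-1)$ fat and $2(r+1-\frac c2)$ thin vertices. Each fat vertex there contributes $\frac c2-1$ further fat neighbours to the next layer. Each thin vertex $w$ there contributes $r-\deg_t(w)-1$ fat neighbours to the next layer. The thin degrees of these thin vertices are controlled by $t=f-\frac v4=\frac{(c-2)v}{8}$.

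If this direct count does not immediately give an impossible equation, I would fall back on one of two alternatives. The first is to apply Step 1 to mixed edges: for a mixed edge the tree has size $2r^2+1+\deg_f(\text{fat end})-\deg_t(\text{thin end})\le v$. Under equality this pins down $\deg_t$ of thin vertices relative to $\frac c2$, and summing over mixed edges should contradict $2t+m=\frac{vr}{2}$. The second is to use that, in the equality case, every vertex lies within distance $2$ of both ends of every fat edge. Applied to two fat edges at distance $3$, this should yield a short cycle.

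I expect this equality-case analysis to be the only genuinely delicate part; Steps 1–2 are routine.
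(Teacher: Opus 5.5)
Your Steps 1 and 2 are correct, and your local count is sharper than the paper's. The Moore tree on a fat edge $xy$ has exactly $2r^2+\deg_f(x)+\deg_f(y)$ vertices, so $\deg_f(x)+\deg_f(y)\le c$, and averaging gives $f\le\frac{cv}{8}$ and $t\le\frac{(c-2)v}{8}$.

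The gap is Step 3, and it cannot be filled: the equality case actually occurs, so the strict inequalities are false in general. Take $r=2$, $c=4$, $v=12$. Let $F_0F_1\cdots F_5$ be a hexagon of fat vertices, and for $i=0,1,2$ add a path $F_iT_iT_{i+3}F_{i+3}$ through two new thin vertices.
\begin{itemize}
\item There are six vertices of degree $3$ and six of degree $2$.
\item Contracting the three paths gives $K_{3,3}$. Each path a cycle uses adds $2$ to its length, so the girth is $6$.
\item Here $f=6=\frac{cv}{8}$ and $t=3=\frac{(c-2)v}{8}$.
\end{itemize}
This is precisely your equality configuration: every fat vertex has $\deg_f=2=\frac c2$, and every fat-edge tree has $8+4=12=v$ vertices. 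The girth condition forces any $(2,3;6)$-babi-graph of order $12$ with $6$ fat edges to be this graph. So it is the babi-cage of Figure~\ref{(2,3;6)babicage}, which the paper itself exhibits with $6$ fat and $3$ thin edges. Hence none of your three proposed equality-case arguments can produce a contradiction. Strictness does hold when $c$ is odd, since equality forces $\deg_f\equiv\frac c2$ on fat vertices, as you noted.

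For comparison, the paper gets strictness by asserting $\deg_f(x)+\deg_f(y)<c$, using a first layer with $c-1$ fat vertices. But $xy$ is counted in both $\deg_f(x)$ and $\deg_f(y)$. So $\deg_f(x)+\deg_f(y)\ge c$ only forces $c-2$ fat vertices among the $2r$ first-layer vertices, and the tree then has exactly $2r^2+c=v$ vertices, which is no contradiction. That is an off-by-one error, and your Step 1 is the correct version.

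The valid statement is the non-strict one you established in Steps 1--2, and it is all that is used later. In Theorem~\ref{lowergirth6} (with $c=4$), $t\le\frac{r^2}{2}+1$ still contradicts $t\ge r^2-4r+2$ for $r\ge 8$. So drop Step 3 and state the bounds with $\le$, keeping $<$ only for odd $c$.
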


\begin{proof}
Let $xy$ be a fat edge of $\Gamma $. Then $deg_f(x)+deg_f(y) < c,$ because 
$deg_f(x)+deg_f(y) \geq c$
would imply that the Moore tree starting from the edge $xy$ should contain 
at least
$$2+2r+((c-1)r+(2r-(c-1))(r-1)=2r^2+c+1>v$$
vertices. Hence, by Inequality (\ref{average}), we get $c >\frac{8f}{v}.$ Thus
$f < \frac{cv}{8}$, and Equation (\ref{fatedgenumber}) implies $t < \frac{(c-2)v}{8}$. 
\end{proof}

\begin{theorem}\label{lowergirth6}
Let $r\geq 2$ be an integer. Then 
$$n_{bb}(r,r+1;6)\geq
\begin{cases}
2(r^2+2), \quad\text{if $r=2,4$ or $6$,} \\
2(r^2+3), \quad\text{if $r$ is odd,} \\
2(r^2+4), \quad\text{if $r\geq 8$ is even.}
\end{cases}$$
\end{theorem}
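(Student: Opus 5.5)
The plan is to combine the Moore-tree lower bound with a parity argument, which already gives every case except even $r\geq 8$, and then rule out one specific order in that case by a structural analysis based on Proposition \ref{estimatewith-c}. I expect only that structural step to need real work.

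\emph{Parity.} By Theorems \ref{lowerboundbabigraph} and \ref{equal-5-6} we already have $v=n_{bb}(r,r+1;6)>2(r^2+1)$, so $v=2r^2+c$ with $c>2$. Exactly one of $r$ and $r+1$ is odd, so exactly $v/2$ vertices have odd degree. The handshake lemma forces $v/2$ to be even, i.e.\ $4\mid v$.
\begin{itemize}
\item If $r$ is odd, then $2r^2\equiv 2 \pmod 4$, so $c\equiv 2\pmod 4$. Hence $c\geq 6$ and $v\geq 2(r^2+3)$.
\item If $r$ is even, then $2r^2\equiv 0\pmod 4$, so $c\equiv 0\pmod 4$. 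Hence $c\geq 4$ and $v\geq 2(r^2+2)$.
\end{itemize}
This settles the cases $r$ odd and $r=2,4,6$. It remains to show that for even $r\geq 8$ the value $c=4$, i.e.\ $v=2r^2+4$, is impossible. Then the next multiple of $4$, namely $2(r^2+4)$, is the bound.

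\emph{Local structure when $c=4$.} Suppose $\Gamma$ is an $(r,r+1;6)$-babi-graph of order $2r^2+4$. Recount the Moore tree ${\cal T}_{xy}$ of a fat edge $xy$ exactly. It has $2+2r+2r(r-1)+a=2r^2+2+a$ vertices, where $a=deg_f(x)+deg_f(y)-2$ is the number of fat vertices in $D_1^2(x,y)\cup D_1^2(y,x)$. Hence $deg_f(x)+deg_f(y)\leq 3$ for every fat edge. By Proposition \ref{estimatewith-c} we also have $f<v/2$ and $t<v/4$, and Equation (\ref{fatedgenumber}) gives $f=v/4+t$.

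So the fat subgraph has maximum degree $2$, and its degree-$2$ vertices are centres of isolated paths on three vertices; all other components are single edges or isolated fat vertices. Counting fat degrees then relates $t$ to the numbers of isolated fat vertices and of these $3$-vertex paths. I would case-split on whether some fat edge has $deg_f(x)+deg_f(y)=3$:
\begin{itemize}
\item If none does, the fat edges form a perfect matching of the fat vertices, so $t=0$.
\item Otherwise there are $3$-vertex paths, and hence at least as many thin edges as isolated fat vertices.
\end{itemize}

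\emph{Main obstacle.} The hard part is excluding $c=4$ globally for even $r\geq 8$; the local Moore-tree counts are tight and give no contradiction by themselves. The tree from a fat edge covers all but $2-a\in\{1,2\}$ vertices. I would exploit this near-Moore structure as follows. Fix a fat edge $xy$ and take the one or two vertices outside ${\cal T}_{xy}$; call them the \emph{defect}. Every leaf of ${\cal T}_{xy}$ has all its remaining edges going to other leaves or to the defect. Count these edges in two ways, separately on the $x$-side and the $y$-side, and count $6$-cycles through $xy$. This should yield an equation in $r$ of the same kind as Equation (\ref{diophantine}), with the discriminant argument of Theorem \ref{equal-5-6} limiting solutions to small $r$. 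That is consistent with constructions existing for $r=2,4,6$.

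If this global count does not close, my fallback is to vary the edge $xy$ over all fat edges and average the defect counts, as in Proposition \ref{fatedges-2_Gyuri}, to obtain an inequality that fails once $r\geq 8$. I expect this step, identifying precisely where the threshold $r\geq 8$ enters, to be the main difficulty.
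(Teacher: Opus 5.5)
Your parity step is correct and is the same argument the paper uses for $r$ odd and for $r=2,4,6$. The gap is the remaining case, even $r\ge 8$: you never actually exclude $v=2r^2+4$. What you sketch there (defect vertices, counting $6$-cycles through $xy$, a Diophantine equation, a fallback averaging) is a plan, not an argument.

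Its starting point also contains a slip. Your own count $2r^2+2+a\le v=2r^2+4$ gives only $a\le 2$, i.e.\ $deg_f(x)+deg_f(y)\le 4$, because the tree ${\cal T}_{xy}$ is allowed to contain all $v$ vertices. So your description of the fat subgraph as isolated edges, isolated vertices and isolated $3$-vertex paths is unjustified, and so is the case split built on it.

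The missing idea is an elementary lower bound on the number $t$ of thin edges, read off from the same Moore tree.
\begin{itemize}
\item Since $a\le 2$, at least $2r-2$ of the $2r$ vertices of $D_1^2(x,y)\cup D_1^2(y,x)$ are thin. Because the girth is $6$, these thin vertices have $(2r-2)(r-1)$ distinct children at depth $2$.
\item Apart from $x$ and $y$, only $v/2-2=r^2$ fat vertices exist. Hence at least $(2r-2)(r-1)-r^2=r^2-4r+2$ of these children are thin.
\item Each such child is joined to its thin parent by a thin edge, and these edges are distinct. So $t\ge r^2-4r+2$.
\end{itemize}
For the upper bound, apply the averaging inequality (\ref{average}) with $deg_f(x)+deg_f(y)\le 4$ for every fat edge. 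This gives $f\le v/2$, hence $t=f-v/4\le v/4=r^2/2+1$. This is the bound of Proposition \ref{estimatewith-c} that you quote; the non-strict version is enough.

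The two bounds are incompatible exactly when $r^2-4r+2>r^2/2+1$, i.e.\ $r^2-8r+2>0$, which holds for all $r\ge 8$. That is precisely where the threshold enters, and no global near-Moore structure or discriminant analysis is needed.
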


\begin{proof}
Suppose that $\Gamma $ is an $(r,r+1;6)$-babi-graph
of order $v=2r^2+c$. Then $c>2$. Since the order of a semi-regular babi-graph is divisible by $4$,
and the smallest numbers greater than $2r^2+2$ and divisible by $4$ are $2r^2+4$ if $r$ is even
and $2r^2+6$ if $r$ is odd, we proved the first and second statements of the theorem. 
 
Now, suppose that $r$ is even and the order of $\Gamma $ is $2r^2+4$.
Consider the Moore tree starting from a fat edge $xy.$ Among the neighbours of $x$ and $y$ there are at least $2r-2$ thin vertices. On the next level, there are 
$$(2r-2)(r-1)=2r^2-4r+2$$
neighbours of these thin vertices. Among them, the maximum number of fat vertices is
$v/2-2\leq r^2.$ Thus $\Gamma $ has at least 
$$(2r^2-4r+2)-r^2=r^2-4r+2$$ 
thin edges. On the other hand, by Proposition \ref{estimatewith-c}, 
$\Gamma $ has less than $\frac{v}{4}=\frac{r^2}{2}+1$ thin edges. 
Since the inequality
$$r^2-4r+2>\frac{r^2}{2}+1$$
holds for $r\geq 8,$ this contradiction proves the statement when $r$ is even.
\end{proof}


In the next theorems, we will give general constructions of balanced semiregular graphs and cages using the incidence graphs of projective planes. Recall that the order of every known finite projective plane is a prime power, and for every prime power $q$ there exists a plane of order $q$.

\begin{theorem}
Suppose that there exists a finite projective plane of order $r$. Then there exists an 
$(r,r+1;6)$-babi-graph of order $2(r^2+r).$
\end{theorem}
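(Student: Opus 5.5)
The plan is to start from the Levi graph of the projective plane and delete one incident point--line pair. The resulting degree-$r$ vertices are too few, so I will then lower the degree of further vertices by deleting a suitable matching. Deleting vertices and edges never shortens a cycle, so the girth stays at least $6$. The only things to check are the degree count and that some $6$-cycle survives.

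\textbf{Step 1 (deleting an incident pair).} Let $\Pi$ be a projective plane of order $r$ and $G$ its Levi graph. By the Preliminaries, $G$ is $(r+1)$-regular, bipartite, of girth $6$, on $2(r^2+r+1)$ vertices. Fix an incident point--line pair $P\in\ell$ and delete the vertices $P$ and $\ell$. Then:
\begin{itemize}
\item the $r$ lines through $P$ other than $\ell$ drop to degree $r$;
\item the $r$ points of $\ell$ other than $P$ drop to degree $r$;
\item the remaining $r^2$ points (the affine points off $\ell$) and the $r^2$ lines not through $P$ keep degree $r+1$.
\end{itemize}
So the current graph $G_0$ has $2(r^2+r)$ vertices, of which $2r$ are thin and $2r^2$ are fat.

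\textbf{Step 2 (balancing by a matching).} To balance, I need $r^2+r$ thin vertices, i.e. $r^2-r$ more. Each affine point lies on $r$ lines not through $P$, and each line not through $P$ meets $r$ affine points. Hence the subgraph $B$ of $G_0$ induced on the $r^2$ affine points and the $r^2$ lines not through $P$ is $r$-regular and bipartite; it is the Levi graph of the type 1 biaffine plane. By K\H{o}nig's theorem (or Hall's theorem), $B$ has a perfect matching $M$ of size $r^2$.

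I will delete a submatching $M'\subseteq M$ with $|M'|=\frac{r(r-1)}{2}$, which is an integer. Each deleted edge lowers one point and one line from degree $r+1$ to $r$, and no vertex is hit twice. The result has
\[
2r+2\cdot\tfrac{r(r-1)}{2}=r^2+r
\]
thin vertices and $r^2+r$ fat vertices, with degree set $\{r,r+1\}$ (for $r\ge 2$ some fat vertices remain, since $r^2-\frac{r(r-1)}{2}>0$).

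\textbf{Step 3 (keeping girth exactly $6$).} The graph obtained is a subgraph of $G$, so its girth is at least $6$. To ensure it is exactly $6$, I will choose beforehand a $6$-cycle of $G_0$ lying inside $B$: three affine points in general position together with the three lines joining them, where the lines are chosen so that none passes through $P$. This is possible, for instance by taking the three points so that no two of them are collinear with $P$. I then choose $M'$ among edges of $M$ whose endpoints avoid the six vertices of this cycle. At most $6$ edges of $M$ touch these vertices, so at least $r^2-6$ edges are available, and $r^2-6\ge \frac{r(r-1)}{2}$ once $r\ge 4$. For $r=2,3$ the count is tight or fails, so I will check these cases by hand or choose $M'$ explicitly so that it misses at least one of the cycle's edges.

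I expect this last point, guaranteeing that a $6$-cycle survives for small $r$, to be the only mild obstacle. For $r=2,3$ I would exhibit the choice directly in the Fano plane and in $\mathrm{PG}(2,3)$.
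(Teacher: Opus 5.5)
Your proof is correct and is essentially the paper's argument. The paper deletes a \emph{non-incident} pair $(P,\ell)$, which leaves $2(r+1)$ thin vertices, and then removes $\frac{(r-2)(r+1)}{2}$ independent edges among the fat vertices, i.e.\ inside the Levi graph of the type~2 biaffine plane. You instead delete an incident pair and remove $\frac{r(r-1)}{2}$ edges of a perfect matching of the type~1 biaffine Levi graph; that is one edge more, and otherwise the same construction. Your appeal to K\H{o}nig's theorem and your check that a hexagon survives supply details the paper leaves implicit.

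One small fix concerns $r=2$. There $B$ is an $8$-cycle, so no hexagon lies inside $B$, and your planned choice of cycle fails. It is simpler, for every $r$, to take any hexagon $H$ of $G_0$ (for example, from a triangle with no vertex equal to $P$ and no side equal to $\ell$). Then require only that $M'$ contain no edge of $H$, rather than avoiding its vertices. Since $M$ shares at most $3$ edges with $H$, this leaves at least $r^2-3\ge\frac{r(r-1)}{2}$ usable edges of $M$ for every $r\ge 2$.
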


\begin{proof}
Let $\Pi _r$ be a finite projective plane of order $r$ and $\Gamma $ be the Levi graph of 
$\Pi _r$. Take a non-incident point-line pair $(P, \ell )$ in $\Pi _r$ and delete the corresponding two vertices of $\Gamma $. The remaining graph $\Gamma '$ has $2(r^2+r)$ vertices, $2(r+1)$ of degree $r$ (these are the points on $\ell $ and the lines
through $P$), and $2(r^2-1)$ vertices of degree $r+1$. Deleting some independent 
edges among the degree $r+1$ vertices, the resulting graph $G$ will be an  
$(r,r+1;6)$-babi-graph of order $2(r^2+r)$.
\end{proof}

Combining this construction and the bound given in Theorem \ref{lowergirth6} we get the following.

\begin{corollary}
$ $ \begin{itemize}
\item
Starting from the Heawood graph, the Levi graph of the Fano plane, we get
a $(2,3;6)$-babi-cage of order $12$.
\item
Starting from the Levi graph of $\mathrm{PG}(2,3)$, we get
a $(3,4;6)$-babi-cage of order $24$.
\end{itemize}
\end{corollary}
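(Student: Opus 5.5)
Both parts of the corollary follow by comparing the order $2(r^2+r)$ from the preceding theorem with the lower bound of Theorem \ref{lowergirth6}. For each item I would first build the graph and then check that these two numbers coincide.

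\emph{The case $r=2$.} Take the Heawood graph, the Levi graph of $\mathrm{PG}(2,2)$, and a non-incident point-line pair $(P,\ell)$. Deleting the two corresponding vertices leaves $12$ vertices. The $3$ points on $\ell$ and the $3$ lines through $P$ now have degree $2$; the other $6$ vertices keep degree $3$. So the numbers of degree-$2$ and degree-$3$ vertices are already equal. The preceding theorem's ``delete independent fat edges'' step then removes $(2(r^2-1)-2(r+1))/2=0$ edges, so nothing further is needed.

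For this graph I would check three things. Deleting vertices cannot create cycles, so the girth is at least $6$. A $6$-cycle survives, namely a triangle of $\mathrm{PG}(2,2)$ avoiding $P$ and $\ell$; one exists because the Fano plane has triangles missing any given point and non-incident line. The degree set is $\{2,3\}$. Theorem \ref{lowergirth6} with $r=2$ gives $n_{bb}(2,3;6)\ge 2(4+2)=12$, and this graph has order $2(r^2+r)=12$. Hence it is a $(2,3;6)$-babi-cage.

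\emph{The case $r=3$.} Start from the Levi graph of $\mathrm{PG}(2,3)$, which has $26$ vertices, and delete a non-incident pair $(P,\ell)$. This gives $24$ vertices: $8$ of degree $3$ (the $4$ points on $\ell$ and the $4$ lines through $P$) and $16$ of degree $4$. Balancing requires $4$ more thin vertices, so I would remove a matching of $2$ edges among degree-$4$ vertices. Such vertices are points off $\ell$ joined to lines not through $P$. Choose incident pairs $(Q_1,m_1)$ and $(Q_2,m_2)$ with $Q_i\ne P$, $Q_i\notin\ell$, $P\notin m_i$, and all four elements distinct; there are plenty of these. Removing edges cannot lower the girth below $6$. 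A $6$-cycle survives because the remaining incidence structure is large and contains a triangle that avoids the deleted items. Theorem \ref{lowergirth6} with $r$ odd gives $n_{bb}(3,4;6)\ge 2(9+3)=24$, which equals $2(r^2+r)$. Hence this graph is a $(3,4;6)$-babi-cage.

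The only step needing care is certifying that the girth stays exactly $6$, that is, exhibiting one explicit surviving triangle of the plane. This is routine: in each plane pick three non-collinear points off $\ell$ and different from $P$ whose connecting lines avoid $P$ and the deleted edges.
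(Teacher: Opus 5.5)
Your proposal is correct and follows the paper's argument: the construction deletes a non-incident point-line pair and has order $2(r^2+r)$, which equals the lower bound of Theorem~\ref{lowergirth6}, namely $2(r^2+2)=12$ for $r=2$ and $2(r^2+3)=24$ for $r=3$; your explicit check that a $6$-cycle survives fills in a detail the paper leaves implicit. One harmless slip: $(2(r^2-1)-2(r+1))/2$ counts the fat vertices that must become thin, so the number of edges to delete is half of that, which matches your $2$ edges for $r=3$ and changes nothing.
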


There exist non-isomorphic $(2,3;6)$-babi cages of order $12$. The graph given in the previous corollary 
can also be constructed in the following way:
take two disjoint $6$-cycles $C_1=(0,1,2,3,4,5)$ and $C_2=(0',1',2',3',4',5')$, and add the edges
$\{00',22',44'\}$. This graph has 3 fat edges and no thin edge. The $(2,3;6)$-babi cage shown in Figure 
\ref{(2,3;6)babicage}, however, has 6 fat edges and 3 thin edges.

\begin{figure}[h]
\begin{center}
\includegraphics[scale=0.3]{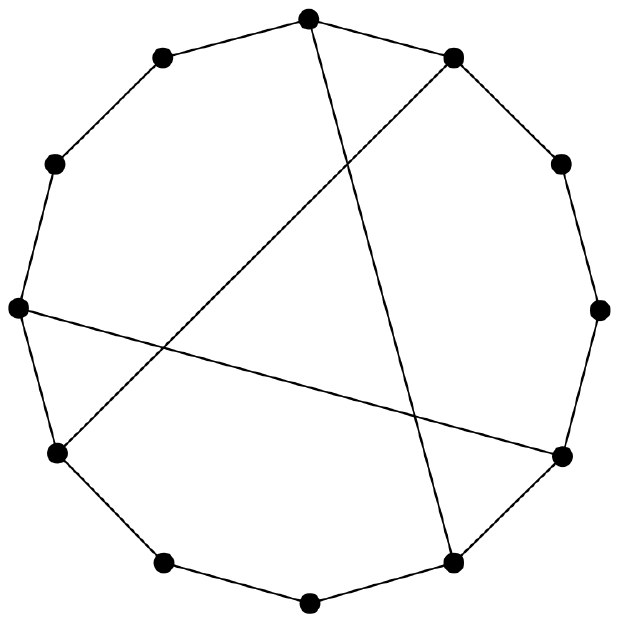}
\caption{A $(2,3;6)$-babi-cage of order $12$ (House of Graphs \cite{HoG}, no. 54321)}
\label{(2,3;6)babicage}
\end{center}
\end{figure}

\begin{theorem}
Suppose that there exists a finite projective plane of order $r >3$. Then there exists an 
$(r,r+1;6)$-babi-graph of order $2(r^2+r-2).$
\end{theorem}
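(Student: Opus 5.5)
The plan is to delete six vertices of the Levi graph of $\Pi_r$ so that every remaining vertex loses at most one neighbour. We then correct the balance by removing a matching among fat vertices, exactly as in the proof of the previous theorem.

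\textbf{Construction.} Let $\Gamma$ be the Levi graph of a projective plane $\Pi_r$ of order $r>3$. Choose a triangle: three non-collinear points $P_1,P_2,P_3$ and the three lines $\ell_1,\ell_2,\ell_3$ joining them in pairs. Delete these six vertices. The remaining graph $\Gamma'$ has $2(r^2+r+1-3)=2(r^2+r-2)$ vertices, which is the required order. Deleting edges later does not change the order.

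\textbf{Degrees in $\Gamma'$.}
\begin{itemize}
\item Any line other than the $\ell_i$ passes through at most one $P_i$, since the line through two of the $P_i$ is one of the $\ell_j$.
\item Any point other than the $P_i$ lies on at most one $\ell_i$, since two sides of the triangle meet only in a vertex.
\item Through each $P_i$ there are $r-1$ non-deleted lines, and on each $\ell_i$ there are $r-1$ non-deleted points. These $6(r-1)$ vertices have degree $r$ in $\Gamma'$.
\item All other vertices have degree $r+1$. These are the $(r-1)^2$ points off the triangle and the $(r-1)^2$ lines through no $P_i$.
\end{itemize}

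\textbf{Balancing.} We need $r^2+r-2$ thin vertices, so $(r^2+r-2)-6(r-1)=(r-1)(r-4)$ further fat vertices must become thin. This number is even and nonnegative because $r>3$. Removing one edge between a fat point and a fat line turns both into thin vertices. So it suffices to find a matching of size $(r-1)(r-4)/2$ in the subgraph $F$ induced by the fat vertices.

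A fat point $Q$ lies on exactly three lines meeting the vertices of the triangle, namely $QP_1,QP_2,QP_3$, and these lines are thin. Hence $Q$ has exactly $r-2$ fat line neighbours, and dually each fat line has $r-2$ fat point neighbours. Thus $F$ is an $(r-2)$-regular bipartite graph with $(r-1)^2$ vertices on each side. By K\"onig's theorem (Hall's condition) $F$ has a perfect matching of size $(r-1)^2\geq (r-1)(r-4)/2$. Take any sub-matching of the required size; for $r=4$ nothing is removed. The resulting graph has $r^2+r-2$ vertices of degree $r$ and $r^2+r-2$ vertices of degree $r+1$.

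\textbf{Girth.} The girth is at least $6$, since the graph is a subgraph of $\Gamma$. The remaining point, and the one I expect to need the most care, is ensuring that some hexagon survives the edge deletions. Pick points $A\in\ell_1$, $B\in\ell_2$, $C\in\ell_3$, none of them a vertex of the triangle and not collinear. The hexagon $A,AB,B,BC,C,CA$ lies in $\Gamma'$, because the lines $AB,BC,CA$ are not sides of the triangle.

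Its edges at $A,B,C$ are edges at thin vertices, so the matching never removes them. The matching consists only of fat–fat edges, and every edge of this hexagon has a thin endpoint, so the hexagon survives automatically. Hence the girth is exactly $6$, and the graph is an $(r,r+1;6)$-babi-graph of order $2(r^2+r-2)$.
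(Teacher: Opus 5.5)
Your proposal is correct and follows the same route as the paper: delete the three non-collinear points $P_1,P_2,P_3$ and the three lines joining them from the Levi graph, then remove a matching among the degree-$(r+1)$ vertices. Beyond the paper, you also justify two points it leaves implicit: the fat subgraph is $(r-2)$-regular bipartite, so a matching of size $(r-1)(r-4)/2$ exists, and a hexagon through thin vertices survives, so the girth is exactly $6$.
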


\begin{proof}
Let $\Pi _r$ be a finite projective plane of order $r$ and $\Gamma $ be the Levi graph of 
$\Pi _r$. Take three non-collinear points, $P_1, \, P_2$ and $P_3$ in $\Pi _r$ and delete the vertices corresponding to these points and the three lines $P_1P_2, \, P_2P_3$ and $P_3P_1$ of $\Gamma $. The remaining graph $\Gamma '$ has $2(r^2+r-2)$ vertices, $6(r-1)$ of degree $r$ (these are the points on the lines $P_iP_j$ and the lines
through the points $P_i$), and $2(r^2-2r+1)$ vertices of degree $r+1$. Since $r>3,$ the number of degree $r+1$ vertices is greater than or equal to the number of degree
$r$ vertices. Deleting some independent edges among the degree $r+1$ vertices, the resulting graph $G$ will be an $(r,r+1;6)$-babi-graph of order $2(r^2+r-2)$.
\end{proof}

Again, combining this construction and the bound given in Theorem \ref{lowergirth6}, we get the following.

\begin{corollary}
$ $ \begin{itemize}
\item
Starting from the Levi graph of $\mathrm{PG}(2,4)$, we get
a $(4,5;6)$-babi-cage of order $36$.
\item
Starting from the Levi graph of $\mathrm{PG}(2,5)$, we get
a $(5,6;6)$-babi-cage of order $56$.
\end{itemize}
\end{corollary}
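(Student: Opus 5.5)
The plan is to combine the preceding theorem (the construction of order $2(r^2+r-2)$) with the lower bound of Theorem \ref{lowergirth6}, checking in each case that the two numbers coincide. Since $4$ and $5$ are prime powers, $\mathrm{PG}(2,4)$ and $\mathrm{PG}(2,5)$ exist, and $r>3$ holds in both cases. The construction therefore gives a $(4,5;6)$-babi-graph of order $2(16+4-2)=36$ and a $(5,6;6)$-babi-graph of order $2(25+5-2)=56$. Theorem \ref{lowergirth6} gives $n_{bb}(4,5;6)\geq 2(4^2+2)=36$, since $r=4$ is in the first case. It also gives $n_{bb}(5,6;6)\geq 2(5^2+3)=56$, since $r=5$ is odd. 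Matching upper and lower bounds then show that the constructed graphs are babi-cages.

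The one point I would verify rather than quote is that the construction really produces a graph of girth exactly $6$ with the right degrees in these two small cases. Deleting the three points $P_1,P_2,P_3$ and the three lines $P_iP_j$ leaves, as computed in the proof of the theorem, $6(r-1)$ vertices of degree $r$ and $2(r-1)^2$ vertices of degree $r+1$.

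For $r=4$ these counts are $18$ and $18$. So the graph $\Gamma'$ is already balanced and no edges need to be deleted. Its girth is at least $6$ because it is a subgraph of the Levi graph. It contains a $6$-cycle coming from any triangle of the plane whose vertices and sides avoid the deleted points and lines, and such triangles exist.

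For $r=5$ the counts are $24$ thin and $32$ fat vertices, so we must delete $4$ independent fat edges. First I would show that such edges exist. A point $Q$ off the three sides lies on the three distinct lines $QP_i$, which are thin. Hence $Q$ is adjacent to exactly $r-2=3$ fat lines, and dually each fat line contains exactly $3$ fat points. The fat vertices therefore induce a $3$-regular bipartite graph. By K\H{o}nig's theorem this graph has a perfect matching, so any $4$ edges of it may be chosen.

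Deleting edges cannot create shorter cycles. The main (mild) obstacle is to ensure that some $6$-cycle survives. To handle it, I would take a triangle of the plane consisting of points that lie on the deleted sides (thin points) and lines through the $P_i$ (thin lines). Such a triangle yields a $6$-cycle that uses no fat–fat edge and is therefore untouched by the deletion. The resulting graph is thus a $(5,6;6)$-babi-graph of order $56$.
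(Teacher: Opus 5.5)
Your main argument is exactly the paper's proof: the construction of order $2(r^2+r-2)$ gives $36$ and $56$, which match the bounds $2(4^2+2)$ and $2(5^2+3)$ from Theorem~\ref{lowergirth6}. Your $r=4$ check is also fine ($18$ thin and $18$ fat vertices, no deletion needed). However, the extra verification you do for $r=5$ contains two concrete errors.

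\textbf{The number of deleted edges.} Deleting an edge between two fat vertices makes both endpoints thin. After deleting $k$ independent fat edges you therefore have $24+2k$ thin and $32-2k$ fat vertices. Balance forces $k=2$ (in general $k=(r-1)(r-4)/2$), not $k=4$. With $4$ deletions you end up with $32$ thin and $24$ fat vertices, which is not balanced.

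\textbf{The triangle you propose for the surviving $6$-cycle does not exist.}
\begin{itemize}
\item A thin line through $P_k$ is not a side, so it meets the sides $P_iP_k$ and $P_jP_k$ only in $P_k$. It meets $P_iP_j$ in a point $A\neq P_i,P_j$. Hence every thin line has the form $AP_k$ and contains exactly one thin point.
\item Consequently the thin--thin edges form a perfect matching between the $3(r-1)$ thin points and the $3(r-1)$ thin lines. In particular, no cycle consists of thin vertices only.
\item The fix is easy. Take thin points $A\in P_1P_2$ and $B\in P_2P_3$. The line $AB$ meets $P_1P_3$ in a thin point, so you can choose a different thin point $C\in P_1P_3$, making $A,B,C$ non-collinear.
\item The lines $AB$, $BC$, $CA$ pass through none of the $P_i$, so they are fat. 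The hexagon $A,AB,B,BC,C,CA$ therefore uses only mixed edges and survives any deletion of fat--fat edges.
\item Alternatively, fix a $6$-cycle first and choose the two deleted fat edges off it.
\end{itemize}

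With these corrections your argument is complete. It is then slightly more careful than the paper's, which does not check that a $6$-cycle survives the deletion.
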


\begin{theorem}\label{rcong1mod4}
If $r\equiv 1 \pmod 4$ and there exists a finite projective plane of order $r$, then there exists an
$(r,r+1;6)$-babi-graph of order $2r^2+r+1$
\end{theorem}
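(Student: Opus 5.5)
The plan is to delete $r+1$ carefully chosen vertices from the Levi graph $\Gamma$ of $\Pi_r$, and then remove a few independent fat edges to restore the balance. First note the arithmetic. The target order is $2r^2+r+1 = 2(r^2+r+1)-(r+1)$. Writing $r=4m+1$, we get $2r^2+r+1=32m^2+20m+4$, which is divisible by $4$, as it must be for a semi-regular babi-graph.

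\textbf{The deletion.} Fix a point $P$ of $\Pi_r$ and one line $m$ through $P$. Delete from $\Gamma$ the vertex $P$ and the $r$ vertices corresponding to the lines through $P$ other than $m$. This removes exactly $r+1$ vertices. Next I check the degrees in the remaining graph $\Gamma'$.
\begin{itemize}
\item Each line not through $P$ keeps all its $r+1$ points, so it still has degree $r+1$; there are $r^2$ such lines.
\item The line $m$ loses only the point $P$, so it has degree $r$.
\item A point $Q\ne P$ lies on exactly one line through $P$, namely the line $PQ$. If $Q\in m$, it loses nothing and keeps degree $r+1$; there are $r$ such points.
\item If $Q\notin m$, then $PQ$ was deleted, so $Q$ has degree $r$; there are $r^2$ such points.
\end{itemize}
Hence $\Gamma'$ has $r^2+r$ vertices of degree $r+1$ and $r^2+1$ vertices of degree $r$. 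Its girth is at least $6$, being a subgraph of $\Gamma$.

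\textbf{Balancing.} We need $(2r^2+r+1)/2$ fat vertices. Deleting an edge between two fat vertices turns both into thin vertices, and the surplus of fat vertices is $(r^2+r)-(2r^2+r+1)/2=(r-1)/2$. So we must delete $k=(r-1)/4$ independent fat edges, and $k$ is an integer precisely because $r\equiv 1\pmod 4$. Fat edges exist in abundance: each of the $r$ points of $m\setminus\{P\}$ is joined to the $r$ lines through it other than $m$, and none of these lines passes through $P$. So we can pick $k\le r$ independent such edges, one at each of $k$ distinct points of $m$, going to distinct lines. After deleting them, every vertex has degree $r$ or $r+1$, with equally many of each.

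\textbf{The main obstacle: girth exactly $6$.} The remaining check is that the girth is still exactly $6$ and not larger. It suffices to exhibit a hexagon avoiding all deleted vertices and edges. Take three non-concurrent lines, none through $P$ and none among the $k$ chosen lines. Choose them so that their three pairwise intersection points lie off $m$ and are different from $P$; this is easy since $r\ge 5$. These three lines and three points form a $6$-cycle that survives the deletions. This completes the construction of an $(r,r+1;6)$-babi-graph of order $2r^2+r+1$.
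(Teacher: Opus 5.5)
Your proposal is correct and is essentially the paper's construction with points and lines interchanged: the paper deletes a line $\ell$ together with the $r$ points of $\ell$ other than an incident point $P$, and then removes $(r-1)/4$ independent fat edges joining lines through $P$ to points on them; you delete a point $P$ together with the $r$ lines through $P$ other than $m$, and remove fat edges at points of $m\setminus\{P\}$. Your explicit surviving hexagon, showing the girth is exactly $6$, fills in a step the paper only asserts.
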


\begin{proof}
Let $\Pi _r$ be a finite projective plane of order $r$ and $\Gamma $ be the Levi graph of 
$\Pi _r$. Take an incident point-line pair $(P,\ell )$. 
Delete from $\Gamma $ the vertices 
corresponding to $\ell$ and all the points on $\ell$ except $P$. Then the resulting graph,
$\Gamma'$ has $r^2+1$ vertices of degree $r$ (these are the $r^2$ lines not through $P$ 
and the point $P$) and $r^2+r$ vertices of degree $r+1$ (these are the points except $P$ and
the $r$ lines through $P$).  

Since $r\equiv 1 \pmod 4$, we can take $\frac{r-1}{4}$ lines through $P$ (except $l$) and one point different from $P$ on each of them. Delete from $\Gamma'$ the edges that join the selected 
incident point-line pairs. The new graph, $\Gamma''$, has $r^2+1+2\frac{r-1}{4}=\frac{2r^2+r+1}{2}$ vertices of degree $r$ and $r^2+r-2\frac{r-1}{4}=\frac{2r^2+r+1}{2}$ vertices of degree $r+1$. Its girth 
remained $6,$ hence $\Gamma''$ is a $(r,r+1;6)$-babi-graph of order $2r^2+r+1$. 
\end{proof}

Applying Theorems \ref{lowergirth6} and  \ref{rcong1mod4} for $r=5,$ we obtain the following:
\begin{corollary} There exits a $(5,6;6)$-babi-cage of order $56$. 
\end{corollary}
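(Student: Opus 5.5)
The statement combines a lower bound with a construction for $r=5$, so the plan has two halves.

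For the lower bound I will apply Theorem \ref{lowergirth6}. Since $r=5$ is odd, it gives
$$n_{bb}(5,6;6)\geq 2(r^2+3)=2\cdot 28=56.$$

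For the upper bound I will apply Theorem \ref{rcong1mod4}. We have $5\equiv 1\pmod 4$, and $\mathrm{PG}(2,5)$ exists because $5$ is prime. The theorem therefore yields a $(5,6;6)$-babi-graph of order
$$2r^2+r+1=50+5+1=56.$$
Concretely, take an incident point-line pair $(P,\ell)$ in $\mathrm{PG}(2,5)$. Delete $\ell$ and the five points of $\ell$ other than $P$ from the Levi graph. Then choose $\frac{r-1}{4}=1$ line $m\neq \ell$ through $P$ and one point $Q\neq P$ on $m$, and delete the edge $Qm$.

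Let me check the degree counts to be safe. After the first deletion there are $r^2+1=26$ vertices of degree $5$ and $r^2+r=30$ vertices of degree $6$. Deleting the single edge $Qm$ lowers both $Q$ and $m$ from degree $6$ to degree $5$, giving $28$ vertices of each degree. The total order is $56$.

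Removing vertices and edges cannot create short cycles, so the girth stays at least $6$. A $6$-cycle survives: take a triangle of points and lines in $\mathrm{PG}(2,5)$ that avoids $\ell$, its points, and the edge $Qm$; one clearly exists. Hence the girth is exactly $6$.

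Since the upper and lower bounds coincide, the graph constructed is a babi-cage and $n_{bb}(5,6;6)=56$. The only point needing any care is confirming that both cited theorems apply at $r=5$ and that their numerical values agree. This is routine, so I expect no real obstacle.
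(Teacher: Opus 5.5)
Your proposal is correct and follows the paper's argument: the corollary is obtained by combining the odd-$r$ lower bound $2(r^2+3)=56$ from Theorem \ref{lowergirth6} with the construction of Theorem \ref{rcong1mod4} at $r=5$, which gives order $2r^2+r+1=56$. Your explicit degree count ($26\to 28$ and $30\to 28$ after deleting the edge $Qm$) and the exhibited surviving $6$-cycle fill in details the paper leaves implicit.
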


Our final construction is based on the geometric properties of ovals in planes of odd order.
Definitions and proofs of the properties of ovals used can be found, for example, in \cite[Lemmas 6.12, 6.14]{KSz}.

\begin{theorem}
Suppose that there exists a finite projective plane of odd order $r+2 >3$ 
that contains an oval (a set of $r+3$ points, no three of them are collinear).
Then there exists an 
$(r,r+2;6)$-babi-graph of order $2(r^2+3r+2).$
\end{theorem}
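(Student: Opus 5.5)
We will realize the graph as an induced subgraph of the Levi graph of the plane: delete suitable point- and line-vertices so that each remaining vertex has degree $r$ or $r+2$, with equal numbers of each. Write $q=r+2$. The target order is $2(r^2+3r+2)=2(q^2-q)$. So from the $q^2+q+1$ points and $q^2+q+1$ lines we must delete $2q+1$ of each. Deleting vertices cannot create short cycles, so the girth stays at least $6$, and only the degrees and a witness $6$-cycle remain to be checked.

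\emph{The construction.} Let $\mathcal{O}$ be the oval and fix $T\in\mathcal{O}$ with tangent $t$. We delete the following:
\begin{itemize}
\item the $q+1$ points of $\mathcal{O}$ together with the $q$ points of $t$ other than $T$;
\item the $q+1$ tangents of $\mathcal{O}$ together with the $q$ lines through $T$ other than $t$. These are exactly the secants through $T$.
\end{itemize}
The second set is the dual of the first: the tangents form a dual oval, and the points of $t$ correspond to the lines through $T$. Both sets have $2q+1$ elements.

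\emph{Degree count.} This uses the standard facts on ovals in planes of odd order \cite[Lemmas 6.12, 6.14]{KSz}. A point off $\mathcal{O}$ lies on $2$ tangents (an external point, $q(q+1)/2$ of these) or on $0$ tangents (an internal point, $q(q-1)/2$ of these). Dually, a line is a tangent, a secant ($q(q+1)/2$ of these) or a passant ($q(q-1)/2$ of these).

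Take a remaining external point $X\notin t$. It loses its two tangents and the line $XT$. That line is a secant through $T$, because $X\notin t$, so it is deleted. Hence $X$ has degree $q+1-3=q-2=r$.

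Take an internal point $Y$. It lies on no tangent, so $Y\notin t$. It loses only $YT$, a secant through $T$, so it has degree $q=r+2$.

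Dually, a passant loses only its intersection with $t$, which is a deleted point different from $T$; it has degree $q=r+2$. A secant not through $T$ loses its two oval points and its intersection with $t$, which is not $T$; it has degree $q-2=r$.

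Counting, there remain $q(q+1)/2-q=q(q-1)/2$ external points and the same number of internal points. Likewise there remain $q(q-1)/2$ secants not through $T$ and $q(q-1)/2$ passants. So there are exactly $q^2-q$ vertices of each degree, and the order is $2(q^2-q)$, as required.

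\emph{Girth.} The girth is even and at least $6$, so it remains to exhibit a $6$-cycle, that is, a triangle of three surviving points whose three joining lines all survive. I expect this to be the main, if mild, obstacle, and the place where $q>3$ is used.

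The plan is to pick a surviving secant $s$ not through $T$. It has $q-2\ge 3$ surviving points. Take two of them, $X_1,X_2$. Through $X_1$ pick a surviving line $m_1\ne s$; this is possible because every surviving point has degree at least $q-2\ge 3$. Then choose a surviving point $Z\in m_1\setminus\{X_1\}$ for which the line $ZX_2$ survives. A counting argument should give such a $Z$: each of $X_2$'s deleted lines meets $m_1$ in at most one point, and $m_1$ has at least $q-2$ surviving points. If this count is tight for small $q$, the fallback is to check $q=5$ directly in $\mathrm{PG}(2,5)$ with a conic.
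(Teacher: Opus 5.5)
Your construction is the paper's: delete $\mathcal{O}$, the points of $t\setminus\{T\}$, the $q+1$ tangents and the $q$ secants through $T$. Your degree count and the tally of $q(q-1)$ vertices of each degree also match the paper's, with more detail. You go beyond the paper on the girth. The paper simply asserts that the deletion ``does not decrease or increase the girth''. You correctly note that deleting vertices can only raise the girth, so a surviving $6$-cycle has to be exhibited.

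That step is not closed as written. In the worst case $X_2$ is external, so it has three deleted lines: its two tangents and $X_2T$. Also $m_1$ may be a secant, leaving only $q-3$ surviving points besides $X_1$. Your count then needs $q-3>3$, i.e.\ $q\ge 7$, and leaves $q=5$ open. Your fallback of checking $\mathrm{PG}(2,5)$ with a conic is not self-contained. It silently needs that the plane of order $5$ is unique and that its ovals are conics (Segre's theorem), which is much more than the argument requires.

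The fix is to choose $X_2$ internal. The $q-1$ non-oval points of $s$ split evenly into external and internal points. The only deleted one, $s\cap t$, is external, so $s$ carries $(q-1)/2\ge 2$ surviving internal points. For an internal $X_2$, the only deleted line through it is $X_2T$. Since $X_2\notin m_1$, that line meets $m_1$ in at most one point. Meanwhile $m_1$ has at least $q-3\ge 2$ surviving points other than $X_1$. So a suitable $Z$ exists for every odd $q\ge 5$, and $X_1,X_2,Z$ with the lines $s$, $m_1$, $ZX_2$ give the $6$-cycle. Alternatively, take $m_1$ to be a passant through $X_1$. Passants always survive, so this gives $q-1\ge 4$ candidates against at most $3$ obstructions.
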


\begin{proof}
Let $\mathcal{O}$ be an
oval in the projective plane $\Pi _{r+2}.$ There are $r+3$ tangents to $\mathcal{O}$, one at each of its points, The number of secant lines (lines containing two points of $\mathcal{O}$) is $\frac{(r+3)(r+2)}{2}$, and there are $\frac{(r+1)(r+2)}{2}$ external lines (lines containing no points of $\mathcal{O}$). Any point of $\Pi _{r+2}\setminus \mathcal{O}$ lies on either $2$ or $0$ tangents to $\mathcal{O}.$
A point of $\Pi _{r+2}\setminus \mathcal{O}$
is called \emph{external} or \emph{internal} with respect to $\mathcal{O}$ according
to whether it lies on $2$ or $0$ tangents to $\mathcal{O}.$
There are $\frac{(r+3)(r+2)}{2}$ external
and $\frac{(r+1)(r+2)}{2}$ internal points. 
A tangent line contains one point of $\mathcal{O}$ and $r+2$ external points. If the line $\ell $ is not a tangent to $\mathcal{O}$, then exactly half of the points of  $\ell \setminus \mathcal{O}$ are external and half of them are internal.

Let $\Gamma $ be the Levi graph of 
$\Pi _{r+2}$. Take a point $(P \in  \mathcal{O}$ and let $t_p$ denote the tangent line to 
$\mathcal{O}$ at $P$. Delete from $\Gamma $ the points of $\mathcal{O}\cup t_P$, the tangents to  
$\mathcal{O}$ and all 
other lines through $P$. The order of the resulting graph, $\Gamma'$, is 
$$2((r+2)^2+(r+2)+1)-(2r+5)-(r+3)-(r+2)=2(r^2+3r+2).$$ 

If $R$ is a remaining point, then we deleted the line $PR$ and $0$ or $2$ tangents through $R$, depending on whether $R$ is an internal or an external point. Thus, the degree of $R$ is either $r+2$ or $r$. If $\ell $ is a remained line, then we deleted the point $\ell \cap t_P$ and the $0$ or $2$ points of
$\ell \cap \mathcal{O}$, depending on whether $\ell $ is an external line or a secant. Thus, the degree of $\ell $ is either $r+2$ or $r$. The number of remaining external points is equal to the number of the remaining secants, each of which is $\frac{(r+3)(r+2)}{2}-(r+2)=\frac{(r+1)(r+2)}{2}$. 
The number of remaining internal points and external lines is the same, since we have not deleted any of them. So $\Gamma'$ has $2\frac{(r+1)(r+2)}{2}=r^2+3r+2$ vertices of degree $r$ and the same number of vertices of degree $r+2$. The deletion does not decrease or increase the girth, hence  $\Gamma'$ is an $(r,r+2;6)$-babi-graph of order $2(r^2+3r+2).$
\end{proof}

A typical example for ovals in $\mathrm{PG}(2,q)$ is the projective extension of the normal parabola with affine equation $Y=X^2$. It has one point on the line at infinity, that is, the point at infinity of the vertical lines. If we choose this point as the point $P$ of the construction described in the previous theorem, then we delete all points of the parabola, the line at infinity, the tangents of the parabola, and the vertical lines. Straightforward calculations show that an affine point $(a,b)$ is external or internal,
according to whether $a^2+4b$ is a square or a non-square element  in  $\mathrm{GF}(q)^*$,
the multiplicative group of $\mathrm{GF}(q)$, and an affine line $Y=mX+k$ is secant or external, according to whether $m^2-4k$ is a square or a non-square element in $\mathrm{GF}(q)^*$.
 
\section{Conclusions and Future Work}\label{conclandfuturework}
As stated in the introduction, it is of interest to revisit the classical cage problem in the context of balanced biregular cages. In contrast to previous approaches to biregular or bipartite biregular cages, the graphs considered here exhibit a balance between vertices of different degrees. This feature leads to a fundamentally different problem, closely related to balanced networks with specific structural properties, or more generally, to the theory of balanced graphs.

\medskip
Regarding future work, several research directions can be proposed:

\begin{itemize}
\item {\bf{Problem 1:}} Continue the study of this problem by determining exact values for girth $5$ and $6$, using properties of previously known cages, as well as projective planes and biaffine planes, together with their incidence graphs and amalgamation techniques. 
Additionally, configurations arising in projective planes, such as ovals and hyperovals, or group-theoretic properties, may be exploited.

\item {\bf{Problem 2:}} Extend the study of this problem to vertex-transitive graphs and Cayley graphs.

\item {\bf{Problem 3:}} Determine bounds and constructions for balanced biregular graphs and cages with girth $g \in {7,8,11,12}$ using generalized $n$-gons for $n \in {4,6}$.

\item {\bf{Problem 4:}} Establish general lower and upper bounds using alternative techniques.

\end{itemize}

\noindent
{\bf Gabriela Araujo-Pardo:} 
Instituto de Matem\'aticas-Campus Juriquilla, Universidad Nacional Aut\'onoma de 
M\'exico, C.P. 076230, Boulevard Juriquilla \# 3001, Juriquilla, Qro., M\'exico;  \\
 e-mail:  \texttt{garaujo@im.unam.mx} \\

\noindent
{\bf Gy\"orgy Kiss:} Department of Geometry,  E\"otv\"os Lor\'and University, 1117 Budapest, P\'azm\'any
s. 1/c, Hungary; and Faculty of Mathematics, Natural Sciences and Information Technologies, University of Primorska, Glagolja\v ska 8, 6000 Koper,
Slovenia; \\
e-mail: \texttt{gyorgy.kiss@ttk.elte.hu} \\

\end{document}